\newtheorem{theorem}{Theorem}[section]
\newtheorem{lemma}[theorem]{Lemma}
\newtheorem{definition}[theorem]{Definition}
\newtheorem{proposition}[theorem]{Proposition}
\newtheorem{corollary}[theorem]{Corollary}
\newcommand{\eg}{{\em e.g.}}
\newcommand{\real}{\mathbb{R}}       
\newcommand{\euc}{\mathbb{E}}        
\newcommand{\Rips}{{\mathcal R}}        
\newcommand{\Shadow}{{\mathcal S}}      
\newcommand{\proj}{{p}}                 
\newcommand{\shadowface}{{\Psi}}        
\newcommand{\Cech}{{\mathcal C}}        
\newcommand{\df}[1]{\textbf{{#1}}}          
\newcommand{\spanned}[1]{{\left\langle{#1}\right\rangle}} 
\newcommand{\abs}[1]{\mathopen|#1\mathclose|}       
\begin{document}

\title{RIPS COMPLEXES OF PLANAR POINT SETS}

\author{Erin W. Chambers}
    \address{Department of Computer Science, University of Illinois,
    Urbana-Champaign}
    \email{erinwolf@uiuc.edu}
    \thanks{EWC supported by NSF MSPA-MCS \# 0528086.}

\author{Vin de Silva}
    \address{Department of Mathematics, Pomona College, Claremont CA.}
    \email{vin.desilva@pomona.edu}
    \thanks{VdS supported by DARPA SPA \# 30759.}

\author{Jeff Erickson}
    \address{Department of Computer Science, University of Illinois,
    Urbana-Champaign}
    \email{jeffe@cs.uiuc.edu}
    \thanks{JE supported by NSF MSPA-MCS \# 0528086.}

\author{Robert Ghrist}
        \address{Department of Mathematics and Coordinated Science
        Laboratory, University of Illinois, Urbana-Champaign.}
    \email{ghrist@math.uiuc.edu}
    \thanks{RG supported by DARPA SToMP \# HR0011-07-1-0002 and
    NSF MSPA-MCS \# 0528086.}


\begin{abstract}
Fix a finite set of points in Euclidean $n$-space $\euc^n$, thought
of as a point-cloud sampling of a certain domain $D\subset\euc^n$.
The Rips complex is a combinatorial simplicial complex based on
proximity of neighbors that serves as an easily-computed but
high-dimensional approximation to the homotopy type of $D$. There is
a natural ``shadow'' projection map from the Rips complex to
$\euc^n$ that has as its image a more accurate $n$-dimensional
approximation to the homotopy type of $D$.

We demonstrate that this projection map is 1-connected for the
planar case $n=2$. That is, for planar domains, the Rips complex
accurately captures connectivity and fundamental group data. This
implies that the fundamental group of a Rips complex for a planar
point set is a free group. We show that, in contrast, introducing
even a small amount of uncertainty in proximity detection leads to
`quasi'-Rips complexes with nearly arbitrary fundamental groups.
This topological noise can be mitigated by examining a pair of
quasi-Rips complexes and using ideas from persistent topology.
Finally, we show that the projection map does not preserve
higher-order topological data for planar sets, nor does it preserve
fundamental group data for point sets in dimension larger than
three.
\end{abstract}

\maketitle
\section{Introduction}
\label{sec:intro}

Given a set $X$ of points in Euclidean space $\euc^n$, the
\df{Vietoris-Rips complex} $\Rips_\epsilon(X)$ is the abstract
simplicial complex whose $k$-simplices are determined by subsets of
$k+1$ points in $X$ with diameter at most $\epsilon$. For
simplicity, we set $\epsilon=1$ and write $\Rips:=\Rips_1(X)$ for
the remainder of the paper, with the exception of \S\ref{sec:quasi}.
For brevity (and to conform to typical usage), we refer to $\Rips$
as the \df{Rips complex}. The Rips complex is an example of a
\df{flag complex} --- the maximal simplicial complex with a given
1-skeleton.

The Rips complex was used by Vietoris \cite{Vietoris} in the early
days of homology theory, as a means of creating finite simplicial
models of metric spaces. Within the past two decades, the Rips
complex has been utilized frequently in geometric group theory
\cite{Gromov} as a means of building simplicial models for group
actions. Most recently, Rips complexes have been used heavily in
computational topology, as a simplicial model for point-cloud data
\cite{CCD,CIDZ,CZCG,DC}, and as simplicial completions of
communication links in sensor networks \cite{DG:controlled,
DG:persistence,Jad}.

The utility of Rips complexes in computational topology stems from
the ability of a Rips complex to approximate the topology of a cloud
of points. We make this notion more specific. To a collection of
points, one can assign a different simplicial model called the
\v{C}ech complex that accurately captures the homotopy type of the
cover of these points by balls. Formally, given a set $X$ of points
in some Euclidean space $\euc^n$, the \df{\v{C}ech complex}
$\Cech_\epsilon(X)$ is the abstract simplicial complex where a
subset of $k+1$ points in $X$ determines a $k$-simplex if and only
if they lie in a ball of radius $\epsilon/2$. The \v{C}ech complex
is equivalently the nerve of the set of closed balls of radius
$\epsilon/2$ centered at points in $X$. The \df{\v{C}ech theorem}
(or \df{Nerve lemma}, see, \eg, \cite{Leray}) states that
$\Cech_\epsilon(X)$ has the homotopy type of the union of these
balls.  Thus, the \v{C}ech complex is an appropriate simplicial
model for the topology of the point cloud (where the parameter
$\epsilon$ is a variable).

There is a price for the high topological fidelity of a \v{C}ech
complex. Given the point set, it is nontrivial to compute and store
the simplices of the \v{C}ech complex. The virtue of a Rips complex
is that it is determined completely by its 1-skeleton --- the
proximity graph of the points. (This is particularly useful in the
setting of {\em ad hoc} wireless networks, where the hardware
establishes communication links based, ideally, on proximity of
nodes.) The penalty for this simplicity is that it is not
immediately clear what is encoded in the homotopy type of $\Rips$.
Like the \v{C}ech complex, it is not generally a subcomplex of its
host Euclidean space~$\euc^n$, and, unlike the \v{C}ech complex it
need not behave like an $n$-dimensional space at all: $\Rips$ may
have nontrivial topological invariants (homotopy or homology groups)
of dimension $n$ and above.

The disadvantage of both \v{C}ech and Rips complexes are in their
rigid cut-offs as a function of distance between points. Arbitrarily
small perturbations in the locations of the points can have dramatic
effects on the topology of the associated simplicial complexes.
Researchers in sensor networks are acutely aware of this limitation,
given the amount of uncertainty and fluctuation in wireless
networks. To account for this, several researchers in sensor
networks have used a notion of a distance-based communication graph
with a region of uncertain edges \cite{Barriere,Kuhn}. This
motivates the following construction.

Fix an open \df{uncertainty interval} $(\epsilon,\epsilon')$ which
encodes connection errors as a function of distance. For all nodes
of distance $\leq\epsilon$, there is an edge, and for all nodes of
distance $\geq\epsilon'$, no edge exists. For nodes of distance
within $(\epsilon,\epsilon')$, a communication link may or may not
exist. A \df{quasi-Rips complex} with uncertainty interval
$(\epsilon, \epsilon')$ is the simplicial flag complex of such a
graph. We note that this does not model {\em temporal} uncertainty,
merely spatial.

A completely different model of simplicial complexes associated to a
point cloud comes from considering shadows. Any abstract simplicial
complex with vertices indexed by geometric points in $\euc^n$ (\eg,
a Rips, \v{C}ech, or quasi-Rips complex) has a canonical \df{shadow}
in $\euc^n$, which strikes a balance between computability and
topological faithfulness. For, say, a Rips complex, the canonical
\df{projection} $\proj\colon\Rips\to\euc^n$ is the well-defined
function that maps each simplex in $\Rips$ affinely onto the convex
hull of its vertices in $\euc^n$.  This projection map is continuous
and piecewise-linear. The shadow $\Shadow$ is the image
$\proj(\Rips)$ of this projection map.

%
\begin{figure}[hbt]
\centerline{\includegraphics[width=5.0in]{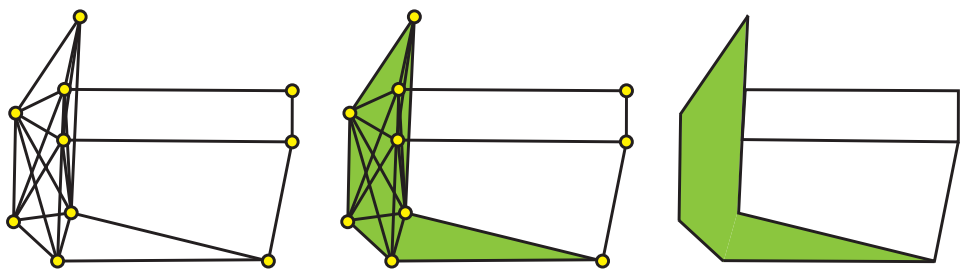}} \label{fig:intro}
\caption{A connectivity graph in the plane [left] determines a
5-dimensional (Vietoris-) Rips complex [center] and its
2-dimensional projected shadow [right].}
\end{figure}
%

This paper studies the topological faithfulness of the projection
map $\proj$ (see Figure \ref{fig:intro}). Specifically, we look at
the connectivity of $\proj$. Recall that a topological map $f:X\to
Y$ is \df{$k$-connected} if the induced homomorphisms on homotopy
groups $\proj_*:\pi_i(X)\to\pi_i(Y)$ are isomorphisms for all $0\leq
i\leq k$: \eg, a 1-connected map preserves path-connectivity and
fundamental group data.

We can now list the principal results of the paper, ordered as they
appear in the following sections.

\begin{enumerate}
\item
For any set of points in $\euc^2$,
$\pi_1(\proj)\colon\pi_1(\Rips)\to\pi_1(\Shadow)$ is an isomorphism.
\item
The fundamental group of any planar Rips complex is free.
\item
Given any finitely presented group $G$, there exists a quasi-Rips
complex $\Rips_Q$ with arbitrarily small uncertainty interval such
that $\pi_1(\Rips_Q)$ is a free extension of $G$.
\item
Given a pair of quasi-Rips complexes $\Rips_Q$, $\Rips_{Q'}$ with
disjoint uncertainty intervals, the image of
$\iota_*:\pi_1(\Rips_Q)\to\pi_1(\Rips_{Q'})$ is free.
\item
The projection map $\proj$ on $\real^n$ is always $k$-connected for
$k=0$ or $n=1$. For all other cases except $(k,n)=(1,2)$ and,
perhaps, $(1,3)$, $k$-connectivity fails on $\real^n$ (see Figure
\ref{fig:knrips}).
\end{enumerate}

\section{Planar Rips complexes and their shadows}
\label{sec:background}

In this section, we restrict attention to the 2-dimensional case.

\subsection{The shadow complex}

The shadow $\Shadow$ is a polyhedral subset of the plane. By
Carath\'eodory's theorem \cite{Helly}, $\Shadow$ is the projection of
the 2-skeleton of~$\Rips$.  Since the vertices of~$\Rips$ are distinct
points in the plane, it follows that distinct edges of~$\Rips$ have
distinct images under~$\proj$, and these are nondegenerate.
Informally we will identify vertices and edges
of~$\Rips$ with their images under~$\proj$. On the other hand,
$\proj$ may be degenerate on 2-simplices.

We can canonically decompose $\Shadow$ into a 2-dimensional
\df{shadow complex} as follows:
\begin{itemize}
\item
A \df{shadow vertex} is either a vertex of $\Rips$
or a point of transverse intersection of two edges of~$\Rips$.
We write $\Shadow^{(0)}$ for the set of shadow vertices.

\item
A \df{shadow edge} is the closure of any component
of $\proj(\Rips^{(1)})\setminus \Shadow^{(0)}$.  Each shadow edge
is a maximal line segment contained in a Rips edge, with no
shadow vertices in its interior.  We write $\Shadow^{(1)}$
for the union of all shadow vertices and shadow edges.

\item
Finally, a \df{shadow face} is the closure of any bounded
component of~$\euc^2 \setminus \Shadow^{(1)}$.
\end{itemize}

The fundamental group $\pi_1(\Shadow)$ may now be described in terms
of combinatorial paths of shadow edges modulo homotopy across shadow
faces, whereas $\pi_1(\Rips)$ may be described in terms of
combinatorial paths of Rips edges modulo homotopy across Rips faces.
This description opens the door to combinatorial methods in the
proof that $\pi_1(\proj)$ is an isomorphism.

\subsection{Technical Lemmas} \label{sec:lemmas}

Theorem \ref{thm:main} will follow from reduction to three special
cases. We prove these cases in this subsection. We use the following
notation.  Simplices of a Rips complex will be specified by square
braces, e.g., $[ABC]$. Images in the shadow complex will be denoted
without adornment, e.g., $ABC$. The Euclidean length of an edge $AB$
will be denoted $\abs{AB}$. Braces $\spanned{\cdot}$ will be used to
denote the span in $\Rips$: the smallest subcomplex containing a
given set of vertices, e.g., $\spanned{ABCD}$.

The following propositions address the three special cases of
Theorem~\ref{thm:main} which are used to prove the theorem.  Certain
induced subcomplexes of~$\Rips$ are shown to be simply connected. In
the first two cases, it is helpful to establish the stronger
conclusion that these subcomplexes are \df{cones}: all maximal
simplices share a common vertex, called the \df{apex}. The first of
these cases is trivial and well-known (\emph{viz.},
\cite{DG:controlled,FGG04}).

\begin{proposition}
\label{prop:abyz}
Let $\Rips=\spanned{ABYZ}$ be a Rips complex containing simplices
$[AB]$ and $[YZ]$ whose images in $\Shadow$ intersect.  Then $\Rips$
is a cone.
\end{proposition}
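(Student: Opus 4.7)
The natural strategy for exhibiting the four-vertex complex as a cone is to locate a single vertex adjacent to the other three. Because $\Rips$ is a flag complex, any such vertex is automatically an apex: every simplex on the remaining three vertices, together with this designated vertex, still has all pairwise distances $\leq 1$ and therefore spans a simplex of $\Rips$, so no face missing the designated vertex can be maximal. So the proposition reduces to producing one vertex of $\{A,B,Y,Z\}$ within unit distance of each of the others.

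My choice is the vertex of $\{A,B,Y,Z\}$ closest to an intersection point $P \in AB \cap YZ$ of the two shadow edges. After relabeling (swap within $\{A,B\}$, within $\{Y,Z\}$, and/or swap the two pairs), I may assume this vertex is $A$, so
\[
|PA| \leq \min\{|PB|,\,|PY|,\,|PZ|\}.
\]
The bound $|AB|\leq 1$ is part of the hypothesis. To control the two ``cross'' distances $|AY|$ and $|AZ|$, the plan is to bound via the triangle inequality through $P$ and then trade $|PA|$ for the complementary term along $YZ$ using minimality:
\[
|AY| \;\leq\; |PA|+|PY| \;\leq\; |PZ|+|PY| \;=\; |YZ| \;\leq\; 1,
\]
and symmetrically $|AZ|\leq |PA|+|PZ|\leq|PY|+|PZ|=|YZ|\leq 1$. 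Combined with the flag-complex observation above, this finishes the argument.

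The only substantive point is recognizing that the minimality of $|PA|$ among the four distances to $P$ is precisely what permits trading $|PA|$ for either $|PY|$ or $|PZ|$, collapsing the triangle-inequality bound onto the edge $[YZ]$. A cruder approach that just bounds $|PA|$ and $|PY|$ by $\tfrac{1}{2}$ separately does handle $|AY|$ but leaves $|AZ|$ uncontrolled, and indeed small examples (such as nearly perpendicular unit edges crossing close to their endpoints) confirm that the ``wrong'' vertex of the quadrilateral $\{A,B,Y,Z\}$ can have a cross-distance strictly greater than $1$. Picking the vertex \emph{closest to $P$} is therefore the one idea the proof really rests on; everything else is a one-line triangle-inequality chase.
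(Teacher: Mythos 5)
Your proof is correct and is essentially the paper's own argument: both select the endpoint closest to the intersection point $P$ of the two crossing edges and use the triangle inequality together with that minimality to bound both cross-distances by $\abs{YZ}\leq 1$, making that vertex an apex. The only cosmetic difference is that you spell out the flag-complex step and the minimality trade explicitly, which the paper leaves terse.
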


\begin{proof}
Let $x$ be the common point of $AB$ and $YZ$.  Each edge is split at
$x$ into two pieces, at most one of which can have length more than
one-half. The triangle inequality implies that the shortest of these
four half-edges must have its endpoint within unit distance of both
endpoints of the traversing edge, thus yielding a 2-simplex in
$\Rips$.
\end{proof}

\begin{proposition}
\label{prop:abxyz}
Let $\Rips=\spanned{ABXYZ}$ be a Rips complex containing simplices
$[AB]$ and $[XYZ]$ whose images in $\Shadow$ intersect.
Then $\Rips$ is a cone.
\end{proposition}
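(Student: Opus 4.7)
The plan is to reduce to Proposition~\ref{prop:abyz} by case analysis on how the shadow segment $AB$ meets the closed triangle $XYZ$.

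\emph{Easy cases.} If an endpoint of $[AB]$---say $A$---lies in the closed triangle, then since $[XYZ]$ is a $2$-simplex the triangle has diameter at most $1$ (its longest side, which is at most $1$), so $|AX|,|AY|,|AZ|\leq 1$. Together with $|AB|\leq 1$ this makes $A$ the apex. If instead the intersection consists only of a triangle vertex, say $X$, lying on segment $AB$, then $|XA|+|XB|=|AB|\leq 1$; combined with $|XY|,|XZ|\leq 1$ this makes $X$ the apex.

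\emph{Main case.} In the remaining configuration both $A$ and $B$ lie strictly outside the triangle and $AB$ crosses through its interior, meeting the boundary at two distinct points $P,Q$. Any two edges of a triangle share a vertex, so after relabeling we may take $P\in[YZ]$ and $Q\in[XZ]$, with shared vertex $Z$. (The sub-case where $P,Q$ lie on the same edge forces $AB$ collinear with that edge; since $A,B$ lie outside the triangle, this forces a triangle vertex onto $[AB]$ and reduces to the second easy case.) Applying Proposition~\ref{prop:abyz} to the pair $[AB],[YZ]$ meeting at $P$ and to the pair $[AB],[XZ]$ meeting at $Q$, the subcomplexes $\spanned{ABYZ}$ and $\spanned{ABXZ}$ are cones with apexes $V_1\in\{A,B,Y,Z\}$ and $V_2\in\{A,B,X,Z\}$ respectively.

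\emph{Combining apexes.} If $V_1\in\{Y,Z\}$, then $V_1$ is adjacent to $A$ and $B$ by the apex condition and to the remaining triangle vertex via an edge of $[XYZ]$, so $V_1$ serves as an apex for $\spanned{ABXYZ}$; symmetrically if $V_2\in\{X,Z\}$. The only remaining case is $V_1,V_2\in\{A,B\}$. If $V_1=V_2$, combining the two apex conditions shows that this common vertex is adjacent to all of $X,Y,Z$, so it is the apex. If $V_1\neq V_2$ then $\{V_1,V_2\}=\{A,B\}$, and both apex conditions include adjacency to the shared triangle vertex $Z$, giving $|ZA|,|ZB|\leq 1$; together with $|ZX|,|ZY|\leq 1$ this makes $Z$ the apex.

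The step I expect to be the main obstacle is the final combination: recognizing that when both subproblem apexes fall among $\{A,B\}$, the shared vertex $Z$ of the two triangle edges crossed by $AB$ is exactly the vertex that inherits enough adjacencies to serve as the apex of the whole complex. The degenerate tangential and collinear configurations are comparatively routine once cleanly absorbed into the easy cases.
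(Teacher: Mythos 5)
Your proof is correct and follows essentially the same route as the paper's: reduce to the case where $AB$ crosses two edges of $XYZ$ sharing a vertex $Z$, apply Proposition~\ref{prop:abyz} to get that $\spanned{ABYZ}$ and $\spanned{ABXZ}$ are cones, and then resolve the apex cases, with the crux being that when the two apexes are $A$ and $B$ the shared vertex $Z$ is adjacent to everything. Your handling of the degenerate configurations (endpoint inside, vertex on $AB$, collinear/tangential contact) is somewhat more explicit than the paper's, but the argument is the same.
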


\begin{proof}
The edge $AB$ intersects the triangle $XYZ$. If $AB$ intersects only
one edge of $XYZ$, then one vertex of $AB$ (say, $A$) lies within
$XYZ$ and cones off a 3-simplex $[AXYZ]$ in $\Rips$. Therefore,
without a loss of generality we may assume $AB$ crosses $ZY$ and
$ZX$.

By Proposition~\ref{prop:abyz}, the subcomplexes $\spanned{ABXZ}$
and $\spanned{ABYZ}$ are cones.  If these two cones have the same
apex, then the entire Rips complex $\Rips$ is a cone with that apex.
Similarly, if either apex lies inside the image triangle $XYZ$, then
$\Rips$ is a cone with that apex.  The only remaining possibility is that
$A$ is the apex of one subcomplex and $B$ is the apex of the other; in this
case, $\Rips$ is a cone over $Z$, since both $A$ and $B$ are connected
to $Z$.
\end{proof}

%
\begin{figure}[hbt]
\centerline{\includegraphics[width=2.25in]{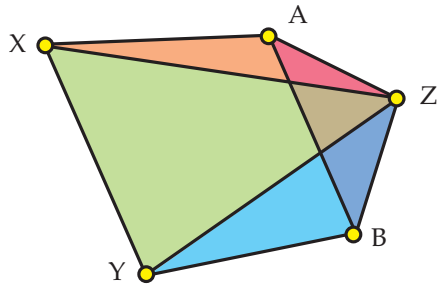}} \caption{The last
case of Proposition \ref{prop:abxyz}.}
\end{figure}
%

\begin{proposition}
\label{prop:abcdxyz}
Let $\Rips=\spanned{ABCDXYZ}$ be a Rips complex containing simplices
$[AB]$, $[CD]$ and $[XYZ]$ whose images in $\Shadow$ meet in a common
point.  Moreover, assume that none of $A, B, C, D$ lies in the interior of $XYZ$.
Then $\pi_1(\Rips)$ is trivial.
\end{proposition}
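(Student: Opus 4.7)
My plan is to cover $\Rips$ by three contractible cone subcomplexes supplied by the previous two propositions and glue them using Seifert--van Kampen. Propositions~\ref{prop:abxyz} and~\ref{prop:abyz}, applied respectively to $U:=\spanned{ABXYZ}$, $V:=\spanned{CDXYZ}$, and $W:=\spanned{ABCD}$, show each of these subcomplexes is a cone. The hypothesis that $A,B,C,D$ lie outside $\mathrm{int}(XYZ)$ rules out the ``interior apex'' case of Proposition~\ref{prop:abxyz}, so the apex $p_U$ of $U$ lies in $\{A,B\}\cup\{X,Y,Z\}$, and similarly for $p_V$; after relabelling I take the apex of $W$ to be $A$, so $[AC]$ and $[AD]$ are Rips edges. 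The pairwise intersections are the single simplices $U\cap V=[XYZ]$, $U\cap W=[AB]$, and $V\cap W=[CD]$, each contractible.

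Van Kampen unfolds in two stages. Since $U\cap V$ is contractible, $\pi_1(U\cup V)=1$. If $p_U=p_V$ the common apex must lie in $\{X,Y,Z\}$ and is then adjacent to every other vertex, so $\Rips$ is itself a cone and we are done; the same happens in those sub-cases where one of $p_U\in\{A,B\}$ or $p_V\in\{C,D\}$ combines with the apex of $W$ to pin down a universal vertex. In the remaining configurations, gluing $K:=U\cup V$ to $W$ across the disconnected intersection $[AB]\sqcup[CD]$ yields $\pi_1(K\cup W)\cong\mathbb{Z}$, generated by a loop that traverses the edge $[AC]\subset W$ and returns from $C$ to $A$ through $V$, across $[XYZ]$, and through $U$.

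To kill this generator I would produce a ``cross'' $2$-simplex $[PQR]\in\Rips$ with $P\in\{A,B\}$, $Q\in\{C,D\}$, $R\in\{X,Y,Z\}$, whose three edges lie respectively in $W$, $V$, and $U$, so that its boundary realises the van Kampen generator. The key geometric input is the midpoint estimate used in the proof of Proposition~\ref{prop:abyz}: the shortest of the four half-edges $|Ax|,|Bx|,|Cx|,|Dx|$ is at most $1/2$, and following it from $x$ to where the corresponding chord crosses $\partial[XYZ]$ produces a triangle vertex within unit distance of the corresponding Rips endpoint. A short case analysis on which pair of sides of $XYZ$ each of $AB$ and $CD$ crosses then selects $R$ so that some $P\in\{A,B\}$ and some $Q\in\{C,D\}$ are both adjacent to $R$. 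I expect the main obstacle to be precisely this final case analysis in the configurations where $p_U$ and $p_V$ are distinct triangle vertices and the cross-adjacencies between $\{A,B,C,D\}$ and $\{X,Y,Z\}$ are as sparse as possible; the fact that all seven vertices lie in a disc of radius $1$ around $x$ should supply the geometric slack needed to force the desired common neighbour in every remaining case.
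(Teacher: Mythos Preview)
Your decomposition is sound and genuinely different from the paper's argument. The paper does not use Seifert--van Kampen at all; instead it argues that $\Rips$ contains no minimal non-contractible edge cycle, reducing (via the previous propositions) to the two candidate $4$-cycles $ACXY$ and $ABCX$ and eliminating each by a geometric case analysis. Your setup --- the three cones $U,V,W$, the pairwise intersections $[XYZ]$, $[AB]$, $[CD]$, and the computation $\pi_1(U\cup V\cup W)\cong\mathbb{Z}$ whenever no vertex is universal --- is correct, and your identification of the van Kampen generator with the boundary of a ``cross'' triangle $[PQR]$ is also correct. What your framework buys is a clean explanation of \emph{why} exactly one relation is needed; what the paper's framework buys is that it never has to name that relation globally, only kill individual short cycles.

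The gap is that you have not produced the cross triangle, and this is precisely where all of the geometric content of the proposition lives. Your two heuristics --- the half-edge estimate from Proposition~\ref{prop:abyz} and the observation that all seven vertices lie in the unit disc about the common point --- are not by themselves enough. The half-edge estimate (made precise in the paper as Lemma~\ref{lem:bxyz}) gives you one edge from a close vertex in $\{A,B,C,D\}$ to \emph{some} vertex of $\{X,Y,Z\}$, but a cross triangle requires two such edges landing on the \emph{same} triangle vertex together with the appropriate edge inside $W$, and these need not line up in the obvious way. The paper's proof of the analogous step (its Claims~2 and~3) has to chain the midpoint inequality twice --- from $\abs{BM}>\tfrac12$ to $\abs{AM}\le\tfrac12$ to $\abs{CM}>\tfrac12$ to $\abs{DM}\le\tfrac12$ --- and then invoke the cone structure of $\spanned{ABXYZ}$ (forcing $A$ to be adjacent to all of $X,Y,Z$ when $B$ is adjacent to none) before the required adjacency appears. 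It also uses a separate lemma (Lemma~\ref{lem:abcxyz}) to handle the sub-case where $AC$ avoids $XYZ$. In your language, the ``sparsest'' configurations you flag at the end are exactly the ones where the first midpoint pass fails and you must switch to the other diagonal; the case analysis you anticipate is real and of roughly the same length and delicacy as the paper's, so ``short'' is optimistic.
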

%
\begin{figure}[hbt]
\centerline{\includegraphics[width=2.5in]{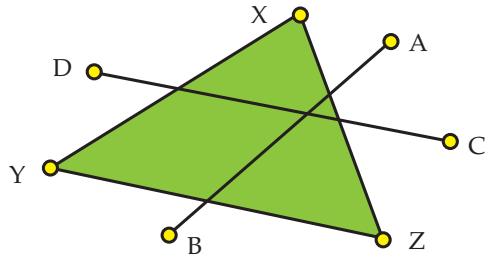}}
\label{fig:abcdxyz} \caption{The setup for Proposition
\ref{prop:abcdxyz}.}
\end{figure}
%

To prove Proposition~\ref{prop:abcdxyz}, we use two further
geometric lemmas.

\begin{lemma}
\label{lem:bxyz}
Let $\Rips = \spanned{BXYZ}$ be a Rips complex containing simplex $[XYZ]$.
If $M$ is a point in $XYZ$ such that $\abs{BM} \leq \frac{1}{2}$, then
$\Rips$ contains at least one of the edges $[BX]$, $[BY]$, $[BZ]$.
\end{lemma}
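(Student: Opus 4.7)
The plan is to reduce the problem to the case where the witness point lies on the boundary of $XYZ$, and then apply the triangle inequality using the fact that each side of $XYZ$ has length at most $1$.

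First I will produce a boundary point $M' \in \partial(XYZ)$ with $\abs{BM'} \leq \tfrac{1}{2}$. There are two cases. If $B$ lies outside the triangle, then the segment from $B$ to $M$ crosses $\partial(XYZ)$ before reaching $M$, and I take $M'$ to be any such crossing, so $\abs{BM'} \leq \abs{BM} \leq \tfrac{1}{2}$. If instead $B \in \overline{XYZ}$, I take $M'$ to be the nearest point of $\partial(XYZ)$ to $B$ and set $d = \abs{BM'}$. The open disk of radius $d$ centered at $B$ then lies entirely in the interior of $XYZ$, so any two diametrically opposite points of its boundary circle both belong to $\overline{XYZ}$; hence $2d$ is at most the Euclidean diameter of $\overline{XYZ}$, which equals the longest side length and therefore does not exceed $1$. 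This forces $d \leq \tfrac{1}{2}$.

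Next I finish with a triangle-inequality argument. The point $M'$ lies on some closed edge of $XYZ$; without loss of generality, $M' \in [XY]$, so that $\abs{M'X} + \abs{M'Y} = \abs{XY} \leq 1$. Relabeling if necessary, I may assume $\abs{M'X} \leq \tfrac{1}{2}$. Then
\[
\abs{BX} \;\leq\; \abs{BM'} + \abs{M'X} \;\leq\; \tfrac{1}{2} + \tfrac{1}{2} \;=\; 1,
\]
so $[BX]$ is an edge of $\Rips$, as required.

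The main obstacle is the subcase $B \in XYZ$: there the original witness $M$ can be interior to the triangle and does not by itself deliver a Rips edge, so a fresh boundary witness must be produced. The crux is the diameter-to-inradius comparison above --- for any planar convex set of diameter at most $1$, every point lies within $\tfrac{1}{2}$ of the boundary, since any inscribed disk has diameter bounded by the set's diameter. This is the only nontrivial geometric input; the rest of the argument is routine.
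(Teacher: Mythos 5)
Your proof is correct and follows essentially the same route as the paper: push the witness point to a boundary point $M'$ of $XYZ$ at distance at most $\tfrac12$ from $B$, land on a side of length at most $1$, take the nearer endpoint, and apply the triangle inequality. The only divergence is the subcase $B \in XYZ$, which you handle with a (correct but unnecessarily elaborate) diameter-versus-inscribed-disk comparison, whereas the paper simply observes that a point inside a triangle of diameter at most $1$ is already within unit distance of all three vertices, so all of $[BX]$, $[BY]$, $[BZ]$ exist at once.
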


\begin{proof}
If $B$ lies in $XYZ$ then all three edges belong to~$\Rips$.
Otherwise, $BM$ meets the boundary of $XYZ$ at a point~$M'$. We may
assume that $M'$ lies on $XY$, with $\abs{M'X} \leq \abs{M'Y}$.
Then $\abs{BX} \leq \abs{BM'} + \abs{M'X} \leq \frac{1}{2} + \frac{1}{2} = 1$.
\end{proof}

\begin{lemma}
\label{lem:abcxyz}
Let $\Rips=\spanned{ABCXYZ}$ be a Rips complex containing simplices
$[ABC]$ and $[XYZ]$.  Suppose that $AB$ intersects $XYZ$ but $BC$
and $AC$ do not.  Then $\Rips$ is a cone.
\end{lemma}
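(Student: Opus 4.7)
The plan is to show $\Rips$ is a cone by identifying a single vertex adjacent to every other vertex; since $\Rips$ is a flag complex, this suffices. The argument rests on a purely geometric containment and in particular does not invoke Proposition~\ref{prop:abxyz}.

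First I set up generic position. The hypothesis rules out any point of $XYZ$ on $AC$ or $BC$, so in particular $A, B, C \notin XYZ$. Since $A, B \notin XYZ$ and $XYZ$ is convex, the segment $AB$ crosses $\partial(XYZ)$ at two points $P, Q$ lying in the interiors of two edges of $XYZ$; those two edges share a common vertex, which I relabel as $Z$, so $P \in XZ$ and $Q \in YZ$. Tangential or vertex-through degeneracies can be absorbed by a small perturbation.

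The core geometric claim is that the portion $T$ of $XYZ$ lying on the same side of line $AB$ as the vertex $C$ is contained in triangle $ABC$. Here $T$ is either the cap $ZPQ$ (if $C$ and $Z$ lie on the same side of $AB$) or the trapezoid $PXYQ$ (if $C$ lies on the same side as $XY$). To prove $T \subset ABC$, observe that $T$ is convex and that $T \cap \partial(ABC) = PQ$: by hypothesis $T \subset XYZ$ is disjoint from $AC$ and $BC$, while $T$ lies on one side of line $AB$ and so meets $AB$ only along $PQ$. Thus $T \setminus PQ$ is connected (removing a boundary chord from a convex region) and disjoint from $\partial(ABC)$, so it lies entirely in $\mathrm{int}(ABC)$ or entirely in its exterior. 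Moving off an interior point of $PQ \subset AB$ a short distance toward $C$ lands in $\mathrm{int}(ABC)$, because the interior of side $AB$ is a boundary edge of triangle $ABC$ with $C$ on the other side; therefore $T \setminus PQ \subset \mathrm{int}(ABC)$, and so $T \subset ABC$.

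The conclusion now follows from the standard convex-combination bound that any point $V$ in a triangle whose sides all have length at most $1$ satisfies $\abs{VA}, \abs{VB}, \abs{VC} \leq 1$, applied to $ABC$. In the cap case, $Z \in ABC$, so $Z$ is adjacent in $\Rips$ to $A, B, C$ and, via $[XYZ]$, to $X, Y$; hence $Z$ is the apex. In the trapezoid case, $X, Y \in ABC$, so $X$ is adjacent to $A, B, C$ and, via $[XYZ]$, to $Y, Z$; hence $X$ is the apex. Either way $\Rips$ is a cone. The main obstacle is the containment $T \subset ABC$; once it is secured, the cone structure falls out.
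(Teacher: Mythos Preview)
Your argument is correct and follows the same underlying idea as the paper's proof: locate a vertex of $XYZ$ inside the (closed) triangle $ABC$, and observe that such a vertex is then within unit distance of $A$, $B$, $C$ (and of course of the other two vertices of $[XYZ]$), making it the apex of the cone. The paper compresses this into a single sentence, simply asserting that the hypotheses force at least one of $X,Y,Z$ to lie in the interior of $ABC$; you supply the geometric justification the paper omits, via the containment $T\subset ABC$. One very small point: when you move off an interior point of $PQ$ ``toward $C$'' to land in $\mathrm{int}(ABC)$, you are tacitly also using that this small step lands in $T\setminus PQ$---which is true since $T$ lies in the closed $C$-side half-plane with $PQ$ as a boundary edge---but it would not hurt to say so explicitly.
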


\begin{proof}
The hypotheses of the lemma imply that at least one of the points $X$, $Y$,
or $Z$ lies in the interior of $ABC$.  $\Rips$ is a cone on this point.
\end{proof}

\begin{proof}[Proof of Proposition~\ref{prop:abcdxyz}]
We argue by exhaustive case analysis that $\Rips$ contains no
minimal non-contractible cycle.

Suppose $\gamma$ is a minimal non-contractible cycle in~$\Rips$.
Because $\Rips$ is a flag complex, $\gamma$ must consist of at least
four Rips edges.  Our previous Propositions imply that this cycle intersects
each simplex $[AB]$, $[CD]$, and $[XYZ]$ at least once.  By minimality,
$\gamma$ contains at most one edge of $[XYZ]$.  Thus, we may assume
without loss of generality (by relabeling if necessary) that $\gamma$ is
of the form $A(B)C(D)X(Y)$ where $(\cdot)$ denotes an optional letter.

\medskip
\textit{Claim~1: In a minimal cycle, the subwords $ABCD$, $CDXY$,
$XYAB$ are impossible.}  Proposition~\ref{prop:abyz} (in the
first case) and Proposition~\ref{prop:abxyz} (in the last two
cases) imply that the subpaths corresponding to these subwords are
homotopic (relative to endpoints) within a cone subcomplex to a path
with at most two edges, contradicting the minimality of $\gamma$.

Claim~1 implies that that there is at most (i.e.\ exactly) one
optional letter. This leaves three possible minimal non-contractible
cycles: $ACXY$, $ABCX$, and $ACDX$.  The last two cases differ only
by relabeling, so it suffices to consider only $ACXY$ and $ABCX$.

\medskip
\textit{Claim~2: $ACXY$ is impossible.} Suppose $ACXY$ is a cycle
in $\Rips$.  If $AC$ meets $XYZ$ then Proposition~\ref{prop:abxyz}
implies that $\spanned{ACXYZ}$ is a cone, so $ACXY$ is contractible.
Thus, we can assume that $AC$ does not meet $XYZ$.

By Proposition~\ref{prop:abyz}, either $[BC]$ or $[AD]$ is a Rips
edge. Without loss of generality, assume $[BC]$ is a Rips edge; then
$[ABC]$ is a Rips triangle.  If $BC$ does not meet $XYZ$, then
Lemma~\ref{lem:abcxyz} implies that $\spanned{ABCXYZ}$ is a cone,
and hence that $ACXY$ is contractible.  Thus we can assume that $BC$
intersects $XYZ$.

Proposition~\ref{prop:abxyz} now implies that both $\spanned{ABXYZ}$
and $\spanned{BCXYZ}$ are cones.  If any of the segments $[BX]$,
$[BY]$, $[BZ]$ is a Rips edge, then the cycle $ACXY$ is homotopic to
the sum of two cycles, contained respectively in the cones
$\spanned{ABXYZ}$ and $\spanned{BCXYZ}$, and hence is contractible.
See Figure~\ref{fig:acxysplit}(a).

%
\begin{figure}[hbt]
\begin{center}
\includegraphics[width=4.5in]{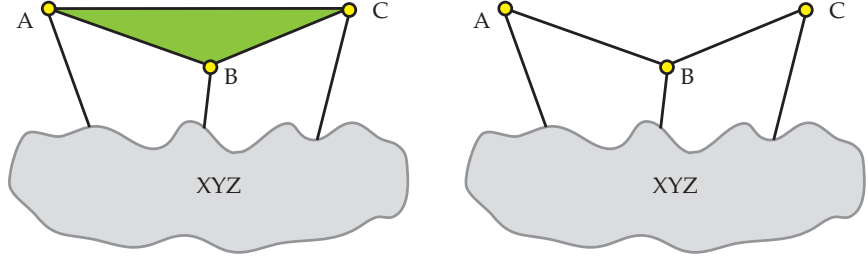}
\caption{$ACXY$ (left), or $ABCX$ (right), splits into two cycles in
the presence of $[BX]$, $[BY]$, or $[BZ]$.} \label{fig:acxysplit}
\end{center}
\end{figure}

We can therefore assume that none of the segments $[BX]$, $[BY]$,
$[BZ]$ is a Rips edge.  In this case, the apex of $\spanned{ABXYZ}$
must be $A$. In particular, the diagonal $[AX]$ of the cycle $ACXY$
belongs to~$\Rips$, and so $ACXY$ is contractible.  This completes
the proof of Claim~2.

\medskip
\textit{Claim~3: $ABCX$ is impossible.}
Suppose $ABCX$ is a cycle in $\Rips$.  If either $[AC]$ or $[BX]$ is a
Rips edge, then $ABCX$ is trivially contractible.
Moreover, if either $[BY]$ or $[BZ]$ is a Rips edge, then the
cycle $ABCX$ reduces to the sum of two cycles, as in
Figure~\ref{fig:acxysplit}(b). The
left cycle is contractible by Proposition~\ref{prop:abxyz}, and
the right cycle is contractible by Claim~2 (suitably
relabeled), so $ABCX$ is contractible in that case too.
We can therefore assume that none of the segments $[AC]$,
$[BX]$, $[BY]$, or $[BZ]$ is a Rips edge.

Now let $M$ be a common point of intersection of $AB$, $CD$, and
$XYZ$.  Lemma~\ref{lem:bxyz} implies that $\abs{BM} > \frac{1}{2}$,
and so $\abs{AM} = \abs{AB} - \abs{BM} \leq \frac{1}{2}$.  Since
$\abs{AC}>1$, we have $\abs{CM} = \abs{AC} - \abs{AM} > \frac{1}{2}$,
and so $\abs{DM} = \abs{CD} - \abs{CM}\leq \frac{1}{2}$.  These
inequalities imply that $\abs{AD} \leq \abs{AM} + \abs{DM} \leq 1$,
so $[AD]$ is a Rips edge.

It follows that $\Rips$ contains the cycle $ADCX$.  This cycle is
homotopic to $ABCX$, since $\spanned{ABCD}$ is a cone by Proposition
\ref{prop:abyz}.  Lemma~\ref{lem:bxyz} implies that at least one of
the segments $[DX]$, $[DY]$, $[DZ]$ must be  a Rips edge.  Arguing
as before, with $D$ in place of $B$, we conclude that $ADCX$, and
thus $ABCX$, is contractible.  This completes the proof of Claim~3.
\end{proof}


\subsection{Lifting Paths via Chaining}

For any path $\alpha$ in $\Rips^{(1)}$, the projection
$\proj(\alpha)$ is a path in $\Shadow^{(1)}$, but not every shadow
path is the projection of a Rips path. Every oriented shadow edge in
$\Shadow$ is covered by one or more oriented edges in $\Rips$. Thus
to every path in $\Shadow^{(1)}$ can be associated a sequence of
oriented edges in $\Rips$. These edges do not necessarily form a
path, but projections of consecutive Rips edges necessarily
intersect at a shadow vertex.

\begin{definition}
\label{def:chainingseq}
Let $[AB]$ and $[CD]$ be oriented Rips edges induced by consecutive
edges in some shadow path. A \df{chaining sequence} is a path from
$A$ to $D$ in the subcomplex $\spanned{ABCD}$ which begins with the
edge $AB$ and ends with the edge $CD$.
\end{definition}

If we concatenate chaining sequences of shadow edges in $\Shadow$ by
identifying the Rips edges in the beginning and end of adjacent
lifting sequences, we obtain a \df{lift} of the shadow path to
$\Rips$. For any shadow path $\alpha$ in $\Shadow$, we let
$\widehat\alpha$ denote a lift of $\alpha$ to the Rips complex by
means of chaining sequences. Note that the lift of a shadow path is
not a true lift with respect to the projection map $\proj$
--- the endpoints, for example, may differ.

\begin{lemma}
\label{lem:uniquelift}
For any path $\alpha$ in $\Shadow^{(1)}$, any two lifts of $\alpha$
to $\Rips$ with the same endpoints are homotopic in $\Rips$ rel
endpoints.
\end{lemma}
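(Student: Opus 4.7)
The plan is to deduce the lemma from Proposition~\ref{prop:abyz}: whenever two Rips edges $r, r'$ have images in $\Shadow$ that intersect, the subcomplex $\spanned{r \cup r'}$ spanned by their four vertices is a cone, hence simply connected. By construction, each chaining sequence lies in exactly such a cone, which gives abundant room for local homotopies.

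I would first dispose of the special case in which $\widehat\alpha_1$ and $\widehat\alpha_2$ assign the same Rips edge $r_i$ to every shadow edge $e_i$ of $\alpha$. The two lifts then differ only in their chaining sequences, and for each consecutive pair $(r_i, r_{i+1})$ both chaining sequences share endpoints and lie in the cone $\spanned{r_i \cup r_{i+1}}$. Simple connectivity of the cone produces a homotopy rel endpoints between the two chaining sequences, and concatenating these local homotopies yields a homotopy from $\widehat\alpha_1$ to $\widehat\alpha_2$ rel endpoints.

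I would then reduce the general case to this by iteratively replacing a single Rips edge of a lift by another Rips edge covering the same shadow edge. Suppose we wish to replace $r_i$ by another Rips edge $r_i'$ that also covers $e_i$, and choose new chaining sequences for the adjacent pairs $(r_{i-1}, r_i')$ and $(r_i', r_{i+1})$. Since $r_i$ and $r_i'$ both cover $e_i$, their images in $\Shadow$ overlap, so $\spanned{r_i \cup r_i'}$ is a cone. Since $r_{i-1}$ covers $e_{i-1}$ and $r_i'$ covers $e_i$, their images meet at the shadow vertex between the two edges, so $\spanned{r_{i-1} \cup r_i'}$ is a cone; the same argument shows that $\spanned{r_i' \cup r_{i+1}}$ is a cone. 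Choose a bridge path inside $\spanned{r_i \cup r_i'}$ from a vertex of $r_i$ to the corresponding vertex of $r_i'$. The loop formed by the old segment of the lift through $r_i$ and the reversed new segment through $r_i'$ decomposes, using this bridge, into sub-loops each contained in one of the five cones $\spanned{r_{i-1} \cup r_i}$, $\spanned{r_{i-1} \cup r_i'}$, $\spanned{r_i \cup r_i'}$, $\spanned{r_i \cup r_{i+1}}$, $\spanned{r_i' \cup r_{i+1}}$. Each such sub-loop is null-homotopic in its cone, so the whole loop is null-homotopic in $\Rips$ and the two lifts are homotopic rel endpoints. Iterating this edge-replacement across all positions $i$ aligns the Rips edges of $\widehat\alpha_1$ with those of $\widehat\alpha_2$, and the first step then finishes the argument.

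The main obstacle is the bookkeeping in the second step: verifying that a single bridge path really does cut the difference loop cleanly into pieces each lying inside one of the listed cones. This is a small van Kampen-style decomposition that exploits the fact that consecutive Rips edges in a lift always share a shadow point. Beyond this, the proof is a direct application of Proposition~\ref{prop:abyz}.
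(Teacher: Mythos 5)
Your proposal is correct and follows essentially the same route as the paper: everything rests on Proposition~\ref{prop:abyz} applied to pairs of Rips edges whose shadows meet, first handling the case of a common edge assignment by homotoping chaining sequences within the cones $\spanned{r_i\cup r_{i+1}}$, and then handling overlapping Rips edges covering the same shadow edge by a local switch (your ``bridge path'' is exactly the paper's edge $[D'D]$). The paper's version is terser but identical in substance.
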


\begin{proof}
Let $\sigma$ and $\tau$ be consecutive shadow edges in $\alpha$, and
let $[AB]$ and $[CD]$ be Rips edges such that $\sigma\subseteq AB$
and $\tau\subseteq CD$.  Proposition \ref{prop:abyz} implies that
all chaining sequences from $A$ to $D$ are homotopic rel endpoints
in $\spanned{ABCD}$, and thus in $\Rips$.  If every shadow edge in
$\alpha$ lifts to a unique Rips edge, the proof is complete.

On the other hand, suppose $\tau\subseteq CD\cap C'D'$ for some Rips
edge $[C'D']$ that overlaps $[CD]$. Proposition~\ref{prop:abyz}
implies that both $[CC']$ and $[DD']$ are Rips edges. Moreover,
since $AB$ intersects $CD\cap C'D'$, any chaining sequence from $A$
to $D$ is homotopic rel endpoints in $\Rips$ to any chaining
sequence from $A$ to $D'$ followed by $[D'D]$.  Thus, concatenation
of chaining sequences is not dependent on uniqueness of edge lifts.
\end{proof}

We next show that the projection of a lift of any two consecutive
shadow edges is homotopic to the original edges.

\begin{lemma}
\label{lem:twoedgesurj}
For any two adjacent shadow edges $wx$ and $xy$, where $AB$ and $CD$
are Rips edges with $wx \subseteq AB$ and $xy \subseteq CD$,
$\proj(\widehat{wx \cdot xy})$ is homotopic rel endpoints to the
path $ABxCD$ in $\Shadow$.
\end{lemma}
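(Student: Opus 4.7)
The plan is to leverage Proposition~\ref{prop:abyz}: since $[AB]$ and $[CD]$ are Rips edges whose images share the shadow vertex $x$, the subcomplex $\spanned{ABCD}$ is a cone with some apex $P \in \{A,B,C,D\}$. I will use this cone structure to show that both paths in question live in a common simply connected planar region, namely $\proj(\spanned{ABCD})$, and are therefore homotopic rel endpoints in $\Shadow$. The heart of the argument, and the main obstacle, is verifying that simple connectivity of the cone descends to its planar image: a priori the affine projection could fold the cone onto itself in a way that creates loops.

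First I would show that $\proj(\spanned{ABCD})$ is star-shaped with respect to $P$. Given $y = \proj(z) \in \proj(\spanned{ABCD})$, the point $z$ lies in some maximal simplex $\sigma$ of the cone, which by the apex property contains $P$. The straight segment $[z,P]$ therefore lies in $\sigma$, and since $\proj$ is affine on $\sigma$ it carries $[z,P]$ to the straight segment $[y,P]$ in the plane. Hence $[y,P] \subseteq \proj(\spanned{ABCD})$ for every $y$, and the planar straight-line retraction $R_t(y) = (1-t)y + tP$ is a strong deformation retraction of $\proj(\spanned{ABCD})$ onto $\{P\}$. In particular, $\proj(\spanned{ABCD})$ is contractible, hence simply connected.

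Next I would locate both paths inside $\proj(\spanned{ABCD})$. The chaining sequence $\widehat{wx \cdot xy}$ is by definition a path in $\spanned{ABCD}$ from $A$ to $D$, so its projection runs from $A$ to $D$ inside $\proj(\spanned{ABCD})$. The path $ABxCD$ is a concatenation of subsegments of the shadow segments $AB$ and $CD$, which are the images of the edges $[AB]$ and $[CD]$ of $\spanned{ABCD}$; so $ABxCD$ also lies in $\proj(\spanned{ABCD})$ and runs from $A$ to $D$. Simple connectivity of the ambient region then forces the two paths to be homotopic rel endpoints within $\proj(\spanned{ABCD}) \subseteq \Shadow$, completing the proof.
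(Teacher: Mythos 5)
Your proof is correct, but it takes a genuinely different route from the paper's. The paper argues combinatorially inside the Rips complex and the shadow: it splits into cases according to whether $[BC]$ or $[AD]$ is a Rips edge, uses Lemma~\ref{lem:uniquelift} to pin down the chaining sequence up to homotopy as either $ABCD$ or $ABADCD$, and then exhibits an explicit shadow triangle ($BCx$ or $ADx$, guaranteed by Proposition~\ref{prop:abyz}) across which the projected path slides onto $ABxCD$. You instead work entirely in the plane: since $\spanned{ABCD}$ is a cone with apex $P$, every maximal simplex maps to a convex set containing $P$, so $\proj(\spanned{ABCD})$ is star-shaped about $P$, hence contractible, and any two paths from $A$ to $D$ inside it --- in particular any projected chaining sequence and $ABxCD$ --- are automatically homotopic rel endpoints. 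Your star-shapedness argument is sound (the segment from $z$ to the apex lies in a maximal simplex and maps affinely to a planar segment), and it neatly disposes of the worry about the projection folding the cone onto itself. What your approach buys is uniformity: it handles every chaining sequence at once without invoking Lemma~\ref{lem:uniquelift} or tracking which diagonals and triangles exist. What the paper's approach buys is an explicit description of the intermediate homotopies through named shadow faces, which meshes with the combinatorial bookkeeping used later in Lemma~\ref{lem:lifting} and Theorem~\ref{thm:main}; but your argument delivers the stated conclusion just as well.
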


\begin{figure}
\begin{center}
\includegraphics[height=2in]{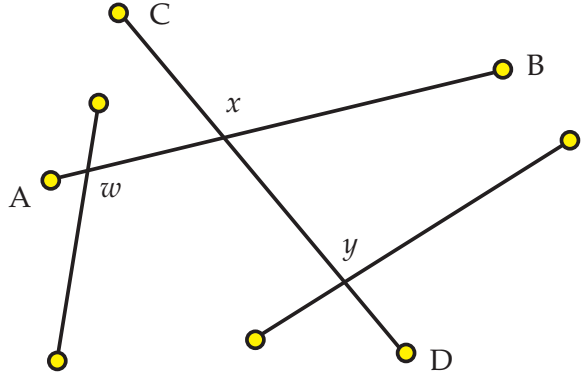}
\caption{The setting for Lemma~\ref{lem:twoedgesurj}}
\label{fig:Surj}
\end{center}
\end{figure}

\begin{proof}
Consider the possible chaining sequences from $A$ to $D$ for $wx
\cdot xy$. Either $BC$ or $AD$ must exist in $\Rips$ by
Proposition~\ref{prop:abyz}.

Suppose $BC$ exists.  By Lemma~\ref{lem:uniquelift}, the chaining
sequence is the Rips path $ABCD$ (up to homotopy rel endpoints).
Either the triangle $[ABC]$ or the triangle $[BCD]$ exists in
$\Rips$ by Proposition~\ref{prop:abyz}, so the triangle $BCx$ is in
shadow. This gives that $ABCD \simeq AxD \simeq ABxCD$ in $\Shadow$.

If $BC$ is not a Rips edge, then $AD$ must be a Rips edges. By
Lemma~\ref{lem:uniquelift}, the chaining sequence is the Rips path
$ABADCD$ (up to homotopy rel endpoints).  Either the triangle
$[ACD]$ or the triangle $[ABD]$ exists in $\Rips$ by
Proposition~\ref{prop:abyz}. Therefore, $ADx$ lies in the shadow, so
we get $ABADCD \simeq ABxCD$ in $\Shadow$.
\end{proof}

\begin{lemma}
\label{lem:lifting}
For any lift $\widehat\alpha$ of any shadow path $\alpha$ with
endpoints in $\proj(\Rips^{(0)})$, we have $\proj(\widehat \alpha)
\simeq \alpha$ rel endpoints.
\end{lemma}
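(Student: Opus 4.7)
The approach is to break $\widehat\alpha$ into its constituent chaining sequences, apply Lemma~\ref{lem:twoedgesurj} locally to each one, and then straighten the resulting zig-zag path in $\Shadow$ to recover $\alpha$. The trivial case $n=1$ is handled first: since both endpoints of $\alpha$ are Rips vertices and a shadow edge has no shadow vertex in its interior, $\alpha$ must coincide with the full Rips edge $[AB]$ that covers it, whence $\widehat\alpha=[AB]$ and $\proj(\widehat\alpha)=\alpha$ on the nose.

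Assume $\alpha=\sigma_1\sigma_2\cdots\sigma_n$ with $n\geq 2$, let $x_0,x_1,\ldots,x_n$ denote its shadow-vertex breakpoints (with $x_0$ and $x_n$ being the Rips-vertex endpoints), and let $E_1=[A_1B_1],\ldots,E_n=[A_nB_n]$ be chosen covering Rips edges, so that the lift $\widehat\alpha$ is the concatenation, with shared-edge identifications, of chaining sequences $c_1,\ldots,c_{n-1}$, where each $c_i$ is a path in $\spanned{E_iE_{i+1}}$ beginning with $E_i$ and ending with $E_{i+1}$. Applying Lemma~\ref{lem:twoedgesurj} to each pair $(\sigma_i,\sigma_{i+1})$ produces a homotopy rel endpoints in $\Shadow$ from $\proj(c_i)$ to the detour path $E_ix_iE_{i+1}$, which fully traverses $E_i$, backtracks along $E_i$ to $x_i$, then fully traverses $E_{i+1}$. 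Pasting these $n-1$ local homotopies together (and cancelling the redundant back-and-forth of each shared edge $E_{i+1}$ introduced by the identifications) yields a homotopy rel endpoints in $\Shadow$ from $\proj(\widehat\alpha)$ to a zig-zag shadow path made of full traversals of each $E_i$ joined by backtracks through the intersection vertices $x_j$.

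The final step is straightening. On each Rips edge $E_i$, which is a line segment contained entirely in $\Shadow$, the zig-zag contribution is a back-and-forth among the endpoints of $E_i$ and the breakpoints $x_{i-1}, x_i$. A straight-line homotopy inside the convex segment $E_i\subset\Shadow$ contracts this subpath, rel its endpoints $x_{i-1}$ and $x_i$, to the direct sub-segment $\sigma_i$ joining them; performing this on every $E_i$ recovers $\alpha$. The chief obstacle is bookkeeping at the shared edges $E_{i+1}$ --- each one appears both as the terminal edge of $c_i$ and as the initial edge of $c_{i+1}$, so the orientations must match up consistently for the identifications and cancellations above to go through, and a shadow edge may admit several distinct covering Rips edges. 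Lemma~\ref{lem:uniquelift} defuses both concerns by guaranteeing that the construction is independent of the specific chaining sequences and covering edges chosen, up to homotopy rel endpoints in $\Rips$.
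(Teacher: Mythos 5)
Your proof follows the paper's argument exactly: apply Lemma~\ref{lem:twoedgesurj} to each consecutive pair of shadow edges, identify adjacent chaining sequences along their shared covering edges, and then deformation retract the resulting spikes (your ``zig-zags'') within each convex Rips edge to recover $\alpha$. The extra bookkeeping you supply --- the $n=1$ base case and the appeal to Lemma~\ref{lem:uniquelift} for independence of the choices of covering edges and chaining sequences --- only makes explicit what the paper leaves implicit.
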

\begin{proof}

For each pair of edges consecutive shadow edges $wx$ and $xy$ in
$\alpha$, where $wx \subseteq AB$, $xy \subseteq CD$, and $AB$ and
$CD$ are Rips edges, Lemma~\ref{lem:twoedgesurj} says that the
projection of their lifting sequence deforms back to $ABxCD$.  Every
adjacent pair of chaining sequences can still be identified along
common edges, since each ends with the first edge in the next one
along $\alpha$. The projection is homotopic rel endpoints to the
original path $\alpha$ except for spikes of the form $xB$ and $xC$
at each shadow junction, which can be deformation retracted, giving
$\proj(\widehat \alpha) \simeq \alpha$.
\end{proof}

\section{$1$-connectivity on $\real^2$}
\label{sec:reduction}

The following is the main theorem of this paper.

\begin{theorem}
\label{thm:main} For any set of points in $\euc^2$,
$\pi_1(\proj)\colon\pi_1(\Rips)\to\pi_1(\Shadow)$ is an isomorphism.
\end{theorem}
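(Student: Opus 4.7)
The plan is to establish the isomorphism by proving surjectivity and injectivity of $\pi_1(\proj)$ separately, leveraging the combinatorial descriptions of $\pi_1(\Rips)$ and $\pi_1(\Shadow)$ as edge-paths modulo face relations, as noted at the end of Section~\ref{sec:background}. Both directions will be driven by the chaining machinery of Section~\ref{sec:lemmas} together with Propositions~\ref{prop:abyz}, \ref{prop:abxyz}, and~\ref{prop:abcdxyz}.

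For surjectivity, I would take any loop $\alpha$ in $\Shadow$ based at a Rips vertex $v$, perturb it to lie in $\Shadow^{(1)}$, and build its chaining lift $\widehat\alpha$ in $\Rips$. Because the initial and final shadow edges of $\alpha$ lie on Rips edges incident to $v$ (and Rips vertices inject into the plane under $\proj$), the lift $\widehat\alpha$ is a genuine loop at $v$ in $\Rips$. Lemma~\ref{lem:lifting} then yields $\proj(\widehat\alpha)\simeq\alpha$ rel basepoint in $\Shadow$, so $\pi_1(\proj)[\widehat\alpha]=[\alpha]$.

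For injectivity, suppose $\gamma$ is a loop in $\Rips$ with $\proj(\gamma)$ null-homotopic in $\Shadow$. The chaining sequences together with Proposition~\ref{prop:abyz} should show that $\gamma\simeq\widehat{\proj(\gamma)}$ in $\Rips$, so it suffices to prove the following statement: if $\alpha$ is a null-homotopic loop in $\Shadow^{(1)}$, then $\widehat\alpha$ is null-homotopic in $\Rips$. Any null-homotopy in $\Shadow$ factors combinatorially as a sequence of back-tracks (which lift trivially) and substitutions along boundaries of shadow faces, so the entire injectivity argument reduces to a key sub-claim: for every shadow face $F$, the lift $\widehat{\partial F}$ is null-homotopic in $\Rips$.

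This sub-claim is the main obstacle. To attack it, pick an interior point $x\in F$; by Carath\'eodory's theorem $x$ lies in the projection of some Rips triangle $[XYZ]$. The Rips edges whose shadow edges appear along $\partial F$ are then forced into a restricted geometric relationship with $[XYZ]$ --- each either shares a vertex with $[XYZ]$ or crosses $[XYZ]$ near $x$. The plan is to induct on the number of crossing edges bounding $F$, peeling them off one or two at a time and invoking Proposition~\ref{prop:abxyz} (single edge vs.\ triangle) or Proposition~\ref{prop:abcdxyz} (two edges vs.\ triangle) to show that each peeling step deforms within a simply connected sub-Rips-complex, with Proposition~\ref{prop:abyz} supplying the edge-edge cases. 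The hardest part will be verifying that this case analysis is genuinely exhaustive --- in particular, arranging for the ``no endpoint inside $[XYZ]$'' hypothesis of Proposition~\ref{prop:abcdxyz} to hold, perhaps by a judicious choice of $[XYZ]$ (e.g., inclusion-minimal over $x$) or by a further combinatorial reduction that absorbs interior vertices into 3-simplex cones as in the easy sub-case of Proposition~\ref{prop:abxyz}.
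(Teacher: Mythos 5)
Your plan is essentially the paper's proof: surjectivity via the chaining lift and Lemma~\ref{lem:lifting}, injectivity reduced to showing that the lift of each shadow face boundary is contractible, and the key hypotheses of Proposition~\ref{prop:abcdxyz} secured by taking $[XYZ]$ inclusion-minimal among projected Rips triangles containing the face (an interior endpoint $W$ would cone off edges to $X,Y,Z$ and produce a smaller triangle containing the face, contradicting minimality --- exactly the trick you guessed). The one step you flag as uncertain is resolved in the paper without any induction or peeling: writing $\partial\shadowface=\alpha_1\cdots\alpha_n$ with $\alpha_i\subseteq A_iB_i$, one fixes connecting paths $\beta_i$ from the basepoint $X$ to $A_i$ inside the cones $\spanned{A_iB_iXYZ}$ (Proposition~\ref{prop:abxyz}) and factors the lifted boundary directly into the $n$ based loops $\gamma_i=\beta_i\cdot\widehat{(\alpha_i\cdot\alpha_{i+1})}\cdot[B_{i+1}A_{i+1}]\cdot\beta_{i+1}^{-1}$, each of which lies in the simply connected subcomplex $\spanned{A_iB_iA_{i+1}B_{i+1}XYZ}$, so no exhaustiveness check beyond Proposition~\ref{prop:abcdxyz} itself is needed.
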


\begin{proof} 
Assume that all $\pi_1$ computations are performed with a basepoint
in $\proj(\Rips^{(0)})$, to remove ambiguity of endpoints in lifts
of shadow paths to $\Rips$. Surjectivity of $\proj$ on $\pi_1$
follows from Lemma \ref{lem:lifting} and the fact that any loop in
$\Shadow$ is homotopic to a loop of shadow edges thanks to the cell
structure of $\Shadow$.

To prove injectivity, note that any contractible cycle in $\Shadow$
is expressible as a concatenation of boundary loops of shadow faces
(conjugated to the basepoint). Thanks to Lemma \ref{lem:lifting},
injectivity of $\pi_1(\proj)$ will follow by showing that the
boundary of any shadow face lifts to a contractible loop in $\Rips$.
Consider therefore a shadow face $\shadowface$ contained in the
projection of a Rips 2-simplex $[XYZ]$, and choose $[XYZ]$ to be
minimal in the partial order of such 2-simplices generated by
inclusion on the projections.

Write $\partial \shadowface$ as $\alpha_1 \cdot \alpha_2 \cdots
\alpha_n$, where the $\alpha_i$ are the shadow edges, and let $[A_i
B_i]$ be a sequence of directed Rips edges with $\alpha_i \subseteq
[A_i B_i]$. Neither the $A_i$ nor the $B_i$ project to the interior
of $XYZ$ (see Figure \ref{fig:shadowface}); if any Rips vertex $W$
did so, the edges $[XW]$, $[YW]$ and $[ZW]$ would exist in $\Rips$.
As $\shadowface$ cannot be split by the image of any of these three
edges, it must be contained in the projected image of a Rips
2-simplex, say $[XYW]$, whose image lies within that of $[XYZ]$,
contradicting the minimality assumption on $[XYZ]$. The hypotheses
of Proposition \ref{prop:abcdxyz} thus apply to $[XYZ]$ and the
consecutive edges $[A_iB_i]$, $[A_{i+1}B_{i+1}]$, and each complex
$\spanned{A_iB_iA_{i+1}B_{i+1}XYZ}$ is simply connected.

\begin{figure}[hbt]
\begin{center}
\includegraphics[width=3.0in]{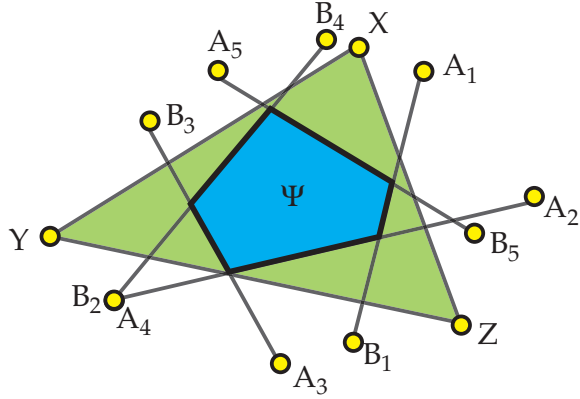}
\caption{The boundary of a shadow face $\shadowface$ within $XYZ$ is
determined by Rips edges $[A_iB_i]$ whose projected endpoints lie outside
$XYZ$.} \label{fig:shadowface}
\end{center}
\end{figure}

Fix the vertex $X$ as a basepoint and fix a sequence of edge paths
$\beta_i$ in $\spanned{A_iB_iXYZ}$ from $X$ to $A_i$. Such paths
exist and are unique up to homotopy since (by
Proposition~\ref{prop:abxyz}) $\spanned{A_iB_iXYZ}$ is a cone. We
decompose $\widehat{\partial\shadowface}$ into loops
$\gamma_1\cdots\gamma_n$, where $\gamma_i$ is the loop with
basepoint $X$ given by
\[
    \gamma_i
    =
    \beta_i
        \cdot
    \widehat{(\alpha_i \cdot \alpha_{i+1})}
        \cdot
    [B_{i+1} A_{i+1}]
        \cdot
    \beta_{i+1}^{-1}
\]
where all indices are computed modulo $n$. By
Proposition~\ref{prop:abcdxyz}, each of these loops $\gamma_i$ is
contractible; hence, so is $\widehat\shadowface$.
\end{proof}

\begin{corollary}
\label{cor:torsion}
The fundamental group of a Rips complex of a planar point set is
free.
\end{corollary}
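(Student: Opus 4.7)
The plan is to reduce to a classical statement about planar sets. By Theorem \ref{thm:main}, $\pi_1(\proj)\colon\pi_1(\Rips)\to\pi_1(\Shadow)$ is an isomorphism, so it suffices to prove that $\pi_1(\Shadow)$ is free. The shadow $\Shadow$ is a compact two-dimensional polyhedral subset of $\R^2$, equipped with the finite cell structure described in Section \ref{sec:background}.

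The key external input I would invoke is the classical fact that every connected open subset of the plane has free fundamental group. To transfer this to $\Shadow$ itself, I would thicken $\Shadow$ to a small open $\delta$-neighborhood $U \subset \R^2$. Because $\Shadow$ is a finite polyhedron, hence an ANR embedded in the plane, $U$ deformation retracts onto $\Shadow$ for all sufficiently small $\delta$, so $\pi_1(\Shadow) \cong \pi_1(U)$, which is free.

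A more combinatorial alternative would proceed directly from the shadow cell structure: the 1-skeleton $\Shadow^{(1)}$ is a finite planar graph, whose fundamental group is free with a canonical basis given by loops encircling the bounded complementary faces of its planar embedding. Each shadow 2-cell is a disk attached along exactly one such loop, killing precisely the corresponding free generator, and the quotient of a free group by a subset of a free basis is again free. I do not foresee any serious obstacle here; the only point requiring a moment's care is the existence of a deformation-retracting neighborhood of $\Shadow$ in $\R^2$, which is immediate from its polyhedral structure.
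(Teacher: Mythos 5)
Your proposal is correct and is precisely the argument the paper intends: the corollary is stated without proof immediately after Theorem \ref{thm:main}, the implicit reasoning being that $\pi_1(\Rips)\cong\pi_1(\Shadow)$ and that $\Shadow$, being a compact planar polyhedron (hence deformation retract of a small open neighborhood in $\euc^2$), has free fundamental group by the classical fact about open subsets of the plane. Your write-up simply makes that omitted step explicit.
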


\section{Quasi Rips complexes and shadows}
\label{sec:quasi}

We observe that Theorem \ref{thm:main} fails for quasi-Rips
complexes, even for those with arbitrarily small uncertainty
intervals. The failure of Proposition \ref{prop:abyz} in the
quasi-Rips case makes it a simple exercise for the reader to
generate examples of a quasi-Rips complexes which are
simply-connected but whose shadows are not. Worse failure than this
is possible.

\begin{theorem}
\label{thm:quasi}
Given any uncertainty interval $(\epsilon,\epsilon')$ and any
finitely presented group $G$, there exists a quasi-Rips complex
$\Rips_Q$ with $\pi_1(\Rips_Q)\cong G*F$, where $F$ is a free group.
\end{theorem}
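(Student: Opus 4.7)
My plan is to build, for each finite presentation $\langle g_1,\dots,g_k \mid r_1,\dots,r_m\rangle$ of $G$, an explicit planar point configuration $X$ together with a choice of edges in the uncertainty interval $(\epsilon,\epsilon')$ so that the resulting quasi-Rips complex $\Rips_Q$ has the homotopy type of the standard presentation $2$-complex $K_G$ wedged with finitely many circles; van Kampen will then give $\pi_1(\Rips_Q) \cong G \ast F$ for some free group $F$.

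The atomic building blocks are: (i) a distinguished basepoint vertex $v_0$; (ii) for each generator $g_i$, a loop $C_i$ realized as a large regular polygon through $v_0$ with side length $\epsilon$, so that adjacent polygon vertices are forced to be joined by an edge while nonadjacent ones are at distance strictly exceeding $\epsilon$, hence optional under the uncertainty interval and declined; (iii) for each relator $r_j$, a small triangulated planar disk $D_j$ sitting in its own region of the plane, whose boundary vertices are placed so as to fall within the uncertainty window of the polygon vertices corresponding to the letters of $r_j$, with the attaching edges declared present. The polygons for distinct generators are placed in pairwise far-apart regions of the plane, meeting only at $v_0$; all inter-building-block distances other than the designed attachment distances are pushed beyond $\epsilon'$.

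I would then compute $\pi_1(\Rips_Q)$ by van Kampen. The $2$-simplices of $\Rips_Q$ consist precisely of the triangles internal to each $D_j$ together with the attachment triangles that identify the boundary of $D_j$ with the corresponding subpath of the $C_i$. Collapsing these contractible pieces gives a complex homotopy equivalent to $K_G$ wedged with a bouquet of circles arising from extra spanning cycles introduced by the subdivision of generator loops and by the attaching edges; this yields $\pi_1(\Rips_Q) = G \ast F$, with the rank of $F$ determined by the combinatorics of the construction.

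The hardest step will be controlling the flag completion of the $1$-skeleton: three pairwise adjacent vertices in $\Rips_Q$ automatically span a $2$-simplex, and a single unintended $2$-simplex connecting different pieces could kill part of $G$. I would address this by ensuring that the plane distance between any two points drawn from different building blocks exceeds $\epsilon'$ except at the specified attachment points, so that any triple of mutually adjacent vertices is forced to lie entirely within one building block, where the $2$-simplices are exactly those of the chosen triangulation.
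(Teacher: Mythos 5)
There is a genuine gap, and it sits exactly where the real difficulty of the theorem lies. Your relator disks $D_j$ cannot be ``small triangulated planar disks sitting in their own region of the plane.'' An attaching edge in a quasi-Rips complex can only exist between vertices at distance less than $\epsilon'$, so every boundary vertex of $D_j$ must lie within $\epsilon'$ of the polygon vertex it attaches to. Those polygon vertices spell out the word $r_j$: they traverse several of the large generator polygons $C_i$ (each of diameter on the order of $|r_j|\cdot\epsilon$), which you have placed in pairwise far-apart regions meeting only at $v_0$. Hence the boundary of $D_j$ must geometrically shadow the entire relator path, sweeping through all of those regions and returning repeatedly to a neighborhood of $v_0$. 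This destroys the premise on which your control of the flag completion rests: distinct relator disks sharing a generator, a relator disk and the polygon it attaches to, and the several polygons themselves near $v_0$ (where $2k$ neighbors of $v_0$ all lie on a circle of radius at most $\epsilon$, forcing cross-block edges of length $\le\epsilon$ once $k\ge 3$) all come within distance $\epsilon$ of one another. Edges at distance $\le\epsilon$ are \emph{forced}, not optional, so the flag condition manufactures $2$-simplices you did not design and cannot decline, and your only proposed tool for ruling them out --- spatial separation beyond $\epsilon'$ --- is unavailable. This is not a technicality: Theorem~\ref{thm:main} shows that configurations governed by genuine metric proximity in the plane have free $\pi_1$, so any construction that realizes relators by ``honest'' geometric disks is fighting the main theorem.

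The paper's proof goes the opposite way: rather than spreading vertices out so the metric separates building blocks, it crams \emph{all} vertices into three tiny clusters at the corners of an equilateral triangle of side $(\epsilon+\epsilon')/2$, so that every cross-cluster distance lies in the uncertainty window (every such edge is a free combinatorial choice) and every within-cluster edge is forced. A $3$-colored triangulation of the presentation complex $K$ then embeds using only cross-cluster edges; the forced monochromatic cliques are tamed by first replacing $K$ with a ``blowup'' $\tilde K$ (joins in place of gluing maps) whose flag completion is still homotopy equivalent to $K$, after which the three big monochromatic simplices only cone off color classes and contribute the free factor $F$. If you want to repair your argument, the key realization you are missing is that the uncertainty interval should be used to make essentially \emph{all} adjacencies optional simultaneously, not to isolate pieces of the complex metrically.
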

\begin{proof}
It is well-known that any finitely presented group $G$ can be
realized as the fundamental group of a 2-dimensional cell complex
whose 1-skeleton is a wedge of circles over the generators and whose
2-cells correspond to relations. Such a complex can be triangulated,
and, after a barycentric subdivision, can be assumed to be
3-colored: that is, there are no edges between vertices of the same
color. Call this vertex 3-colored 2-d simplicial complex $K$.

We perform a `blowup' of the complex $K$ to a 3-d simplicial complex
$\tilde{K}$ as follows (see Figure \ref{fig:blowup} for an example).
Recall, the geometric realization of $K$ can be expressed as the
disjoint union of closed $i$-simplices with faces glued via
simplicial gluing maps (the $\Delta$-complex \cite{Hatcher}). To
form $\tilde{K}$, take the disjoint union of closed $i$-simplices of
$K$ and instead of simplicial gluing maps, use the \df{join} to
connect all faces. The 3-coloring of $K$ is inherited by $\tilde{K}$
via the blowup process.

There is a natural collapsing map $c:\tilde{K}\to K$ which collapses
the joins to simplicial identification maps. The inverse image of
any point in an open $2$-simplex ($1$-simplex, resp.) of $K$ is a
closed $0$-simplex ($2$-simplex resp.) of $\tilde{K}$. The inverse
image of a vertex $v\in K$ consists of the 1-skeleton of the link of
$v$ in $K$. If we fill in $\tilde{K}$ by taking the flag completion,
then $c^{-1}(v)$ is a copy of the star of $v$ in $K$. Thus, upon
taking the flag complex of $\tilde{K}$, the fiber of $c$ for each
point in $K$ is contractible, which shows that the flag complex of
$\tilde{K}$ is homotopic to $K$ and thus preserves $\pi_1$.

\begin{figure}
\begin{center}
\includegraphics[width=4.25in]{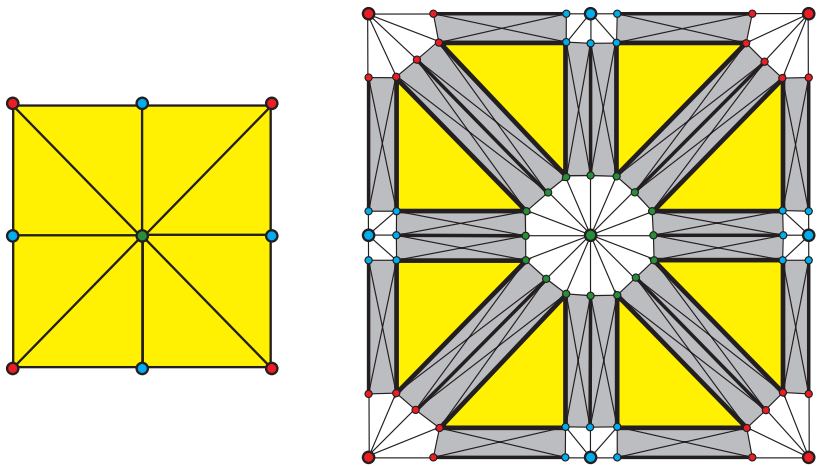}
\caption{A 3-colored simplicial complex $K$ and its blowup
$\tilde{K}$, whose flag completion is homotopy equivalent to $K$.
Opposite edges of $K$ (and thus $\tilde{K}$) can be identified to
yield a torus, projective plane, or Klein bottle.}
\label{fig:blowup}
\end{center}
\end{figure}

We now embed $\tilde{K}$ in a quasi-Rips complex $\Rips_Q$. Define
the vertices of $\Rips_Q$ in $\real^2$ as follows. Fix an
equilateral triangle of side length $(\epsilon+\epsilon')/2$ in
$\real^2$. Embed the vertices of $\tilde{K}$ arbitrarily in
sufficiently small open balls (of radii no larger than
$(\epsilon'-\epsilon)/4$) centered at the vertices of this triangle,
respecting the 3-coloring. For this vertex set in $\real^2$, we
define $\Rips_Q$ by placing an edge between vertices according to
the edges of $\tilde{K}$, using the fact that any two vertices not
of the same color are separated by a distance within the uncertainty
interval. Of course, we must also add a complete connected graph on
all vertices with a given color, since these lie within the small
balls.

The quasi-Rips complex $\Rips_Q$ is the flag complex of this graph.
It contains the flag complex of $\tilde{K}$, along with three
high-dimensional simplices, one for each color.

We claim that any 2-simplex of $\Rips_Q$ which is not also a
2-simplex of $\tilde{K}$ has all vertices of the same color. Proof:
Consider a 2-simplex $\sigma\in\Rips_Q$ spanning more than one
color. Since the only edges added to form $\Rips_Q$ from $\tilde{K}$
have both ends with identical colors, it must be that
$\sigma\cap\tilde{K}$ contains two edges which share a vertex. Any
two edges in $\tilde{K}$ which share a vertex are sent by the
collapsing map $c$ to either (1) two edges of a 2-simplex in $K$; or
(2) a single 1-simplex of $K$; or (3) a single vertex of $K$. In
either case, the entire 2-simplex $\sigma$ exists in the flag
complex of $\tilde{K}$.

We end by showing that $\pi_1(\Rips_Q)$ is a free extension of $G$.
Each of the three large colored simplices added to form $\Rips_Q$
from $\tilde{K}$ is homotopy equivalent to adding an abstract
colored vertex (the apex of the cone) and an edge from this apex to
the blowup of each $0$-simplex of $K$ in $\tilde{K}$. This is
homotopy equivalent to taking a wedge with (many) circles and thus
yields a free extension of the fundamental group of the flag complex
of $\tilde{K}$, $G$.
\end{proof}

We note that the construction above may be modified so that the
lower-bound Rips complex $\Rips_\epsilon$ is connected. If
necessary, the complex can be so constructed that the inclusion map
$\Rips_\epsilon\hookrightarrow\Rips_{\epsilon'}$ induces an
isomorphism on $\pi_1$ (which factors through $\pi_1(\Rips_Q)$).

Theorem \ref{thm:quasi} would appear to be a cause for despair,
especially for applications to sensor networks, in which the rigid
unit-disc graph assumption is unrealistic. The following result
shows that Theorem \ref{thm:main} is not without utility, even when
only quasi-Rips complexes are available.

\begin{corollary}
Let $\Rips_Q$ and $\Rips_{Q'}$ denote two quasi-Rips complexes whose
uncertainty intervals are disjoint. Then the image of
$\pi_1(\Rips_Q)$ in $\pi_1(\Rips_{Q'})$ is a free subgroup of
$\Shadow_{\epsilon'}$ for any $\epsilon'$ in between the uncertainty
intervals of the quasi-Rips complexes.
\end{corollary}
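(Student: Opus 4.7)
The plan is to exploit the gap between disjoint uncertainty intervals by squeezing a standard Rips complex between the two quasi-Rips complexes, and then invoke Theorem~\ref{thm:main} and Corollary~\ref{cor:torsion}. Label the uncertainty intervals $(\epsilon_1,\epsilon_1')$ and $(\epsilon_2,\epsilon_2')$, and assume without loss of generality that $\epsilon_1' \le \epsilon_2$. For any $\epsilon'$ with $\epsilon_1' \le \epsilon' \le \epsilon_2$, every edge of $\Rips_Q$ has length less than $\epsilon_1' \le \epsilon'$, so appears in $\Rips_{\epsilon'}$; and every edge of $\Rips_{\epsilon'}$ has length at most $\epsilon' \le \epsilon_2$, so appears in $\Rips_{Q'}$. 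Because all three complexes are flag completions of their $1$-skeletons, the inclusions of edge sets extend to simplicial inclusions
\[
\Rips_Q \hookrightarrow \Rips_{\epsilon'} \hookrightarrow \Rips_{Q'}.
\]

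The next step is to pass to fundamental groups, obtaining a factorization
\[
\pi_1(\Rips_Q) \xrightarrow{\phi_1} \pi_1(\Rips_{\epsilon'}) \xrightarrow{\phi_2} \pi_1(\Rips_{Q'}).
\]
By Theorem~\ref{thm:main} the projection map induces an isomorphism $\pi_1(\Rips_{\epsilon'}) \cong \pi_1(\Shadow_{\epsilon'})$, and by Corollary~\ref{cor:torsion} this group is free. The image $H := \phi_1(\pi_1(\Rips_Q))$ is a subgroup of a free group, hence is itself free by the Nielsen--Schreier theorem.

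The image of $\pi_1(\Rips_Q)$ in $\pi_1(\Rips_{Q'})$ is $\phi_2(H)$, i.e.\ the image of the free subgroup $H \subseteq \pi_1(\Shadow_{\epsilon'})$ under the inclusion-induced homomorphism $\phi_2$. Under the identification provided by Theorem~\ref{thm:main}, $H$ is exactly the advertised free subgroup of $\pi_1(\Shadow_{\epsilon'})$ through which every loop of $\Rips_Q$ must be routed before reaching $\Rips_{Q'}$.

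The proof is essentially a bookkeeping exercise once the factorization is in place, so there is no real obstacle of substance; the only delicate point is setting up the chain of inclusions correctly and verifying that the flag condition on each complex makes the inclusions respect all higher simplices, not merely edges. Everything topological — the freeness, the identification with a subgroup of $\pi_1(\Shadow_{\epsilon'})$ — is inherited from Theorem~\ref{thm:main} and Corollary~\ref{cor:torsion} applied at the intermediate scale $\epsilon'$.
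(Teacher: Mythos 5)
Your proof is correct and follows essentially the same route as the paper: factor the inclusion-induced map through $\pi_1(\Rips_{\epsilon'})$ via $\Rips_Q\subset\Rips_{\epsilon'}\subset\Rips_{Q'}$, identify $\pi_1(\Rips_{\epsilon'})$ with the free group $\pi_1(\Shadow_{\epsilon'})$ using Theorem~\ref{thm:main} and Corollary~\ref{cor:torsion}, and conclude by Nielsen--Schreier. Your added care in verifying the edge-length containments and the flag-completion step, and in pinpointing that the free subgroup lives in $\pi_1(\Shadow_{\epsilon'})$, only makes the argument more precise than the paper's version.
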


Roughly speaking, this result says that a {\em pair} of quasi-Rips
complexes, graded according to sufficiently distinct strong and weak
signal links, suffices to induce information about a shadow complex.

\begin{proof}
The inclusions $\Rips_Q\subset\Rips_{\epsilon'}\subset\Rips_{Q'}$
imply that the induced homomorphism
$\pi_1(\Rips_Q)\to\pi_1(\Rips_{Q'})$ factors through
$\pi_1(\Rips_{\epsilon'})$. Thus, the image of $\pi_1(\Rips_Q)$ in
$\pi_1(\Rips_{Q'})$ is a subgroup of $\pi_1(\Rips_{\epsilon'})\cong
\pi_1(\Shadow_{\epsilon'})$, a free group. Any subgroup of a free
group is free.
\end{proof}

This is another example of the principle of \df{topological
persistence}: there is more information in the inclusion map between
two spaces than in the two spaces themselves. Knowing two `noisy'
quasi-Rips complexes and the inclusion relating them yields true
information about the shadow.

\section{$k$-connectivity in $\real^n$}
\label{sec:kn}

Theorem \ref{thm:main}  points to the broader question of whether
higher-order topological data are preserved by the shadow projection
map. Recall that a topological space is \df{$k$-connected} if the
homotopy groups $\pi_i$ vanish for all $0\leq i \leq k$. A map
between topological spaces is $k$-connected if the induced
homomorphisms on $\pi_i$ are isomorphisms for all $0\leq i \leq k$.

We summarize the results of this section in
Figure~\ref{fig:knrips}.

\begin{figure}[hbt]
\begin{center}
\includegraphics[width=3.0in]{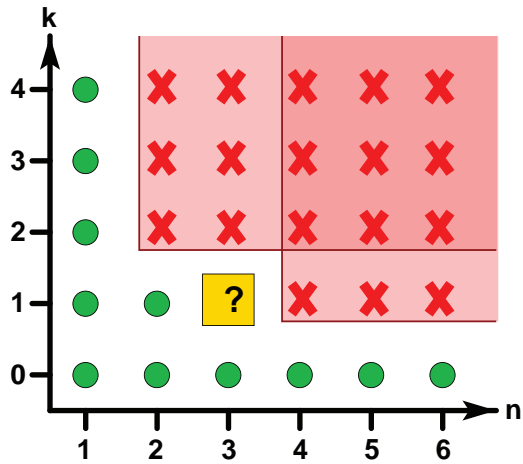}
\caption{For which $(n,k)$ is the Rips projection map in~$\euc^n$
$k$-connected? The only unresolved case is $(3,2)$.}
\label{fig:knrips}
\end{center}
\end{figure}

Throughout this paper, we have ignored basepoint considerations in
the description and computation of $\pi_1$. The following
proposition excuses our laziness.

\begin{proposition}
\label{prop:pi0}
For any set of points in~$\euc^n$, the map $\proj\colon \Rips \to
\Shadow$ is 0-connected.
\end{proposition}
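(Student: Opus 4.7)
The plan is to show that $\proj$ induces a bijection on $\pi_0$. Surjectivity will be essentially immediate: given $y\in\Shadow$, writing $y=\proj(x)$ with $x$ in some simplex $\sigma$ of $\Rips$, the path-connectedness of $\sigma$ lets us connect $x$ to any vertex $v$; projecting gives a shadow path from $y$ to $\proj(v)$, so every path-component of $\Shadow$ contains the image of a Rips vertex.

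For injectivity, I intend to exploit the structure of $\Shadow$ as a finite union of the closed, convex (hence path-connected) sets $\proj(\sigma)$. The standard topological fact that a connected space written as a finite union of closed connected subsets has a connected intersection graph will yield, whenever $\proj(A)$ and $\proj(B)$ lie in a common component of $\Shadow$, a finite chain of Rips simplices $\sigma_0=\{A\},\sigma_1,\ldots,\sigma_N=\{B\}$ satisfying $\proj(\sigma_i)\cap\proj(\sigma_{i+1})\neq\emptyset$ for each $i$. Injectivity will then reduce to the following key assertion: if two Rips simplices $\sigma,\tau$ satisfy $\proj(\sigma)\cap\proj(\tau)\neq\emptyset$, there exist vertices $v$ of $\sigma$ and $w$ of $\tau$ with $\abs{vw}\leq 1$, i.e.\ a direct Rips edge. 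Since each $\sigma_i$ is already a Rips clique on its vertices, chaining such edges will place $A$ and $B$ in a common Rips component.

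The main obstacle is the key assertion, which I plan to settle by a single averaging computation. Given $x\in\proj(\sigma)\cap\proj(\tau)$, write $x=\sum_i\lambda_iv_i=\sum_j\mu_jw_j$ as convex combinations of the respective vertices. A direct expansion of squared distances produces the identity
\[
\sum_{i,j}\lambda_i\mu_j\,\abs{v_i-w_j}^2 \;=\; \sum_i\lambda_i\,\abs{v_i-x}^2 \;+\; \sum_j\mu_j\,\abs{w_j-x}^2,
\]
while the polarization-style identity $\sum_i\lambda_i\abs{v_i-x}^2=\tfrac12\sum_{i,k}\lambda_i\lambda_k\abs{v_i-v_k}^2$, combined with the Rips bound $\abs{v_i-v_k}\leq 1$, will force each summand on the right to be at most $\tfrac12$. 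Therefore the weighted mean $\sum_{i,j}\lambda_i\mu_j\abs{v_i-w_j}^2$ is at most $1$, so some pair $(v_i,w_j)$ must satisfy $\abs{v_i-w_j}\leq 1$, yielding the required Rips edge and completing the plan.
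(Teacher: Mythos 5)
Your proposal is correct and follows essentially the same route as the paper: the identical averaging identity $\sum_{i,j}\lambda_i\mu_j\abs{v_i-w_j}^2=\sum_i\lambda_i\abs{v_i-x}^2+\sum_j\mu_j\abs{w_j-x}^2$ (the paper just translates $x$ to the origin first) together with the same polarization bound of $\tfrac12$ on each term, yielding a Rips edge between any two simplices with intersecting shadows. Your explicit chaining argument via the connected intersection graph is a detail the paper leaves implicit, but the substance is the same.
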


\begin{proof}
Certainly $\pi_0(\proj)$ is surjective, since $\proj$ is surjective.
The injectivity of $\pi_0(\proj)$ is a consequence of the following
claim: If two Rips simplices $\sigma$ and $\tau$ have intersecting
shadows, then $\sigma$ and $\tau$ belong to the same connected
component of~$\Rips$.

To prove the claim, suppose that $\proj(\sigma)$ and $\proj(\tau)$
intersect.  By translation, we can suppose that $0 \in \proj(\sigma)
\cap \proj(\tau)$. If $\{x_i\}$ and $\{y_j\}$ respectively denote
the vertices of $\sigma$ and $\tau$, then
\[
\sum_i \lambda_i x_i = 0 = \sum_j \mu_j y_j
\]
for suitable convex coefficients $\{\lambda_i\}$ and $\{\mu_j\}$.
Then
\begin{eqnarray*}
\sum_{i,j} \lambda_i \mu_j \abs{x_i - y_j}^2
&=& \sum_{i,j} \lambda_i \mu_j \abs{x_i}^2
 - 2 \sum_{i,j} \lambda_i \mu_j (x_i \cdot y_j)
 + \sum_{i,j} \lambda_i \mu_j \abs{y_j}^2
\\
&=&
 \sum_i \lambda_i \abs{x_i}^2
 - 2 \sum_i \lambda_i x_i \cdot \sum_j \mu_j y_j
 + \sum_{j} \mu_j \abs{y_j}^2
\\
&=&
 \sum_i \lambda_i \abs{x_i}^2 + \sum_{j} \mu_j \abs{y_j}^2
 ,
\end{eqnarray*}
and similarly
\begin{eqnarray*}
\sum_{i,i'} \lambda_i \lambda_{i'} \abs{x_i - x_{i'}}^2
&=& 2 \sum_i \lambda_i \abs{x_i}^2,
\\
\sum_{j,j'} \mu_j \mu_{j'} \abs{y_j - y_{j'}}^2
&=& 2 \sum_j \mu_j \abs{y_j}^2
.
\end{eqnarray*}
Since every edge $x_i x_{i'}$ and $y_j y_{j'}$ has length at most~1,
the left-hand sides of these last equations have value at most~1.
Thus $\sum_i \lambda_i \abs{x_i}^2 \leq 1/2$ and $\sum_j \mu_j
\abs{y_j}^2 \leq 1/2$. It follows that $\sum_{i,j} \lambda_i \mu_j
\abs{x_i - y_j}^2 \leq (1/2) + (1/2) = 1$ and so at least one edge
$x_i y_j$ has length at most~1.

Thus the simplices $\sigma, \tau$ are connected by an edge, as
required.
\end{proof}

\begin{proposition}
\label{prop:1D}
For any set of points in~$\euc^1$, the map $\proj\colon\Rips \to
\Shadow$ is a homotopy equivalence.
\end{proposition}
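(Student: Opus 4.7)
The plan is to reduce everything to contractibility on each connected component. By Proposition~\ref{prop:pi0}, $\proj$ already induces a bijection on components, so it will suffice to show that each component of $\Rips$ is contractible and its image in $\Shadow$ is contractible; any map between contractible CW complexes is automatically a homotopy equivalence, and since $\Rips$ and $\Shadow$ both break into the same set of components under $\proj$, the global claim then follows by assembling the pieces.

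For the shadow, fix a component and list the vertices it contains as $q_1 < q_2 < \cdots < q_m$ in the order induced by the line. Connectedness forces $\abs{q_{i+1} - q_i} \leq 1$ for every $i$, since no Rips edge can span a gap longer than~$1$ and such a gap would disconnect the shadow. Hence this component of $\Shadow$ is exactly the closed interval $[q_1, q_m]$, which is contractible.

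For the Rips complex, I induct on $m$, adding the vertices in left-to-right order. Let $\Rips_k$ denote the Rips complex on $\{q_1, \ldots, q_k\}$; the base case $\Rips_1$ is a point. For the inductive step, the link of $q_{k+1}$ in $\Rips_{k+1}$ consists of those $q_i$ (with $i \leq k$) satisfying $\abs{q_{k+1} - q_i} \leq 1$. All such $q_i$ lie inside the interval $[q_{k+1} - 1, q_{k+1}]$ of length $1$, so they are pairwise within distance $1$, and the link is therefore a full simplex on its vertex set, hence contractible. The closed star $\overline{\mathrm{star}}(q_{k+1})$ is the cone on this simplex, also contractible, and the inclusion $\mathrm{link}(q_{k+1}) \hookrightarrow \overline{\mathrm{star}}(q_{k+1})$ is a cofibration between contractible subcomplexes, hence a homotopy equivalence. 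Realizing $\Rips_{k+1}$ as the pushout $\Rips_k \cup_{\mathrm{link}(q_{k+1})} \overline{\mathrm{star}}(q_{k+1})$ and invoking the gluing lemma for cofibrations gives that $\Rips_k \hookrightarrow \Rips_{k+1}$ is a homotopy equivalence, so contractibility is preserved.

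The whole argument is essentially routine; the single geometric observation that makes it go through — and that is unique to the one-dimensional setting — is that every set of points lying in a common interval of length $1$ has pairwise distances at most $1$, which is precisely what forces the link at each inductive step to be a full simplex. I anticipate no real obstacle beyond setting this observation up correctly and invoking the standard homotopy pushout machinery.
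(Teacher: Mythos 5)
Your proof is correct, but it takes a genuinely different route from the paper's. The paper identifies $\Rips$ with the \v{C}ech complex $\Cech_1$ in one dimension (same $1$-skeleton, plus Helly's theorem in $\euc^1$ forces $\Cech_1$ to be a flag complex), then invokes the nerve theorem to conclude that $\Rips$, like $\Shadow$, is homotopy equivalent to a finite disjoint union of closed intervals, i.e.\ to a finite set of points; a $0$-connected map between such spaces is automatically a homotopy equivalence. You instead prove contractibility of each component of $\Rips$ directly, by an inductive vertex-addition argument: the link of the newly added rightmost vertex is a full simplex because its neighbors all lie in a common unit interval, and the gluing lemma for cofibrations then shows each addition preserves homotopy type. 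Both arguments ultimately hinge on the same one-dimensional geometric fact --- points confined to an interval of length $1$ are pairwise within distance $1$ --- which is exactly what makes Helly/nerve work for the paper and what makes your links full simplices. The paper's version is shorter because it outsources the work to two standard theorems; yours is longer but entirely self-contained and makes the mechanism visible (it is essentially a shelling of $\Rips$), at the cost of having to set up the pushout/gluing machinery and to verify separately that each shadow component is a closed interval. Both correctly reduce the final step to the $0$-connectedness established in Proposition~\ref{prop:pi0} together with Whitehead's theorem (or its trivial special case for contractible CW complexes).
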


\begin{proof}
Both $\Rips$ and $\Shadow$ are homotopy equivalent to finite unions
of closed intervals in~$\euc^1$, and therefore to finite sets of
points. This is clear for~$\Shadow$. For $\Rips$, we note that
$\Rips_1$ is equal to the \v{C}ech complex $\Cech_{1}$ in~$\euc^1$.
Certainly the two complexes have the same 1-skeleton. Moreover,
Helly's theorem implies that \v{C}ech complexes are flag complexes in
1D: a collection of convex balls has nonempty intersection if all
pairwise intersections are nonempty. Thus $\Rips_1 = \Cech_{1}$.
By the nerve theorem, this complex has the homotopy type of a union
of closed intervals in~$\euc^1$.

Since a 0-connected map between finite point sets is a homotopy
equivalence, the same conclusion now holds for the 0-connected map
$\proj\colon \Rips \to \Shadow$.
\end{proof}

\begin{proposition}
\label{prop:2D}
There exists a configuration of points in~$\euc^2$ for which $\proj$
is not 2-connected.
\end{proposition}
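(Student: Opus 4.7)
My strategy is to exhibit six points in $\euc^2$ whose Rips complex is homeomorphic to $S^2$, and then observe that the shadow, being a compact $2$-dimensional subcomplex of $\real^2$, has trivial $\pi_2$. The induced map on $\pi_2$ must therefore fail to be an isomorphism.

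First I would place the six points as
\[
  v_1=(0,-c),\ v_2=(c,0),\ v_3=(0,c),\ v_4=(-c,0),\ p=(d,d),\ q=(-d,-d),
\]
with $c=0.51$ and $d=0.4$. A direct computation confirms that the only pairs whose distance exceeds $1$ are the three ``antipodal'' pairs $\{v_1,v_3\}$, $\{v_2,v_4\}$, and $\{p,q\}$ (each of distance $2c>1$ or $2d\sqrt{2}>1$), while all twelve remaining distances are at most $1$. Hence the $1$-skeleton of $\Rips$ is $K_6$ minus a perfect matching.

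Next I would verify that the flag completion is combinatorially the boundary of the $3$-cross-polytope. By pigeonhole on the $3$-part partition, every $4$-element subset of the six vertices contains one of the three missing pairs, so $\Rips$ has no $3$-simplex. Exactly $8$ of the $\binom{6}{3}=20$ triples avoid every missing pair, giving the octahedral $f$-vector $(6,12,8)$ and Euler characteristic $2$. Therefore $|\Rips|\cong S^2$ and $\pi_2(\Rips)\cong\mathbb{Z}$.

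Finally, the shadow $\Shadow\subset\real^2$ is a compact planar $2$-complex. Any such complex is homotopy equivalent to a planar graph: its $2$-cells are Jordan disks attached along face-cycles of the planar $1$-skeleton $\Shadow^{(1)}$, and those bounded face-cycles form a free basis of the free group $\pi_1(\Shadow^{(1)})$, so the cell attachments freely kill generators and leave a wedge of circles. Hence $\pi_2(\Shadow)=0$, and $\pi_2(\proj)\colon\mathbb{Z}\to 0$ cannot be an isomorphism, so $\proj$ is not $2$-connected. The main obstacle is the coordinate construction: the octahedral graph is realizable as a unit-disk graph in $\real^2$ only within a narrow parameter window (roughly $\tfrac12<c<\tfrac{1}{\sqrt2}$, with $d$ squeezed between the lower bound forced by $|pq|>1$ and the upper bound forced by the four unit-disk constraints), so the explicit numeric choice above must be verified carefully.
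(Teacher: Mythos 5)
Your proof is correct and follows essentially the same approach as the paper: exhibit six planar points whose Rips complex is the boundary of the octahedron (hence $\pi_2\cong\mathbb{Z}$) while the shadow, being a finite planar $2$-complex, has trivial $\pi_2$. The paper uses a regular hexagon of radius $r$ with $1/2 < r \leq 1/\sqrt{3}$ rather than your square-plus-diagonal configuration, but this is an immaterial difference; your coordinates do check out (the tightest distance, $\sqrt{d^2+(c+d)^2}\approx 0.994$, is safely below $1$).
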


\begin{proof}
Consider the vertices $rx_1, rx_2, rx_3, rx_4, rx_5, rx_6$ of a
regular hexagon of radius~$r$ centered at the origin. If $1/2 < r
\leq 1/\sqrt{3}$ then only the three main diagonals are missing
from~$\Rips$. Thus $\Rips$ has the structure of a regular
octahedron, and therefore the homotopy type of a 2-sphere. On the
other hand $\Shadow$ is just the hexagon itself (including
interior), and is contractible.
\end{proof}

The example of Proposition \ref{prop:2D} extends to higher homotopy
groups by constructing cross-polytopes, as in \cite{DG:controlled}.

\begin{proposition}
\label{prop:4D}
There exists a configuration of points in~$\euc^4$ for which $\proj$
is not 1-connected.
\end{proposition}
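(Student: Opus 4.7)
The plan is to exhibit an explicit point configuration in $\euc^4$ for which $\pi_1(\proj)$ fails to be an isomorphism. Since the proposition is an existence claim, a single example suffices. The strategy parallels Proposition~\ref{prop:2D}: there, six hexagonally arranged points in $\euc^2$ yielded a Rips complex homotopy equivalent to $S^2$ whose shadow was a contractible hexagonal disk, producing a $\pi_2$ failure. For 1-connectivity failure in $\euc^4$, we want instead a Rips complex with non-trivial $\pi_1$ whose shadow collapses that fundamental group.

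A natural candidate begins with the hexagonal-octahedron configuration of Proposition~\ref{prop:2D} placed in the 2-plane $\{x_3=x_4=0\}\subset \euc^4$, so that the induced Rips subcomplex is homotopy equivalent to $S^2$ with shadow the hexagonal disk. One then adjoins additional carefully placed vertices in the remaining two coordinates, chosen to introduce a new non-contractible loop into $\Rips$ (by forming a short cycle of new edges not bounded by any Rips 2-simplex), while the shadow images of some of the new Rips 2-simplices provide the 2-cell needed to null-homotope that loop in $\Shadow$. Step one is to verify that the augmented Rips complex genuinely acquires a non-trivial $\pi_1$ (no unintended cones appear that would kill the loop simplicially). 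Step two is to verify that the shadow's additional 2-cells indeed kill the new loop in $\euc^4$.

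The main obstacle is achieving this balance. In $\euc^2$, and conjecturally in $\euc^3$, the cone Propositions~\ref{prop:abyz}--\ref{prop:abcdxyz} preclude such a discrepancy: any 2-dimensional overlap of simplex shadows forces Rips 2-simplex fillings that are already present, and so any shadow-killable loop is also killable in $\Rips$. In $\euc^4$, however, the hypotheses of those propositions become generically vacuous, since edges and 2-simplices in 4-space typically meet only in 0-dimensional or empty sets. This dimensional freedom is what makes the construction possible, and the eventual verification reduces to checking a finite list of distance inequalities. Should the direct hexagon-based construction prove too delicate to carry out cleanly, an alternative route is to realize a small flag complex with known non-trivial $\pi_1$ (for example, a barycentric subdivision of a triangulated M\"obius band) as a Rips complex in $\euc^4$ by placing its vertices at prescribed pairwise distances, and then to argue directly---again exploiting the extra ambient dimension---that the shadow of that Rips complex is simply connected.
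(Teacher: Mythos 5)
Your proposal identifies the right target---an explicit configuration in $\euc^4$ on which $\pi_1(\proj)$ fails to be an isomorphism---but it never actually produces one. No coordinates are given, the ``carefully placed vertices'' are unspecified, and the ``finite list of distance inequalities'' is never written down or checked. As it stands this is a plan for a proof rather than a proof, and the plan is exposed at exactly the step you yourself flag: verifying that the augmented flag complex acquires no unintended simplices that cone off your new loop. Your fallback (realizing a small flag complex with nontrivial $\pi_1$ as a Rips complex in $\euc^4$ whose shadow is simply connected) is likewise asserted rather than demonstrated. Since the proposition is an existence statement, nothing short of a concrete, verified configuration establishes it.

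You have also chosen the harder failure mode. You aim to break \emph{injectivity} of $\pi_1(\proj)$ (a loop in $\Rips$ that dies in $\Shadow$), whereas the paper breaks \emph{surjectivity} with an almost trivial modification of the hexagon example: take the six octahedron vertices $r x_i$ of Proposition~\ref{prop:2D} and lift the odd-indexed ones into the two extra coordinates, i.e.\ the points $(r x_1,\epsilon x_1), (r x_2,0), (r x_3,\epsilon x_3), (r x_4, 0), (r x_5,\epsilon x_5), (r x_6,0)$ in $\euc^4$. For small $\epsilon$ the Rips complex is still the regular octahedron, hence simply connected; but the projection identifies exactly one pair of points, namely the barycenters of the triangles $135$ and $246$, both of which land at the origin because $x_1+x_3+x_5 = x_2+x_4+x_6 = 0$. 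Thus $\Shadow$ is a $2$-sphere with two points identified, so $\pi_1(\Shadow)\cong\mathbb{Z}$ while $\pi_1(\Rips)$ is trivial, and $\pi_1(\proj)$ cannot be surjective. If you wish to salvage your route, you must at minimum exhibit explicit coordinates, verify that the flag complex contains a non-contractible loop, and verify that its image bounds in the shadow; until then the proposition is not proved.
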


\begin{proof}
Consider the six points
\[
(r x_1, \epsilon x_1), \quad (r x_2, 0), \quad
(r x_3, \epsilon x_3), \quad (r x_4, 0), \quad
(r x_5, \epsilon x_5), \quad (r x_6, 0)
\]
in~$\euc^4$, in the notation of the previous proposition. Then
$\Rips$ has the structure of a regular octahedron, but the map
$\proj\colon \Rips \to \Shadow$ identifies one pair of antipodal points
(specifically, the centers of the two large triangles, 135 and 246).
Thus $\Rips$ is simply-connected, whereas $\pi_1(\Shadow) =
\mathbb{Z}$.
\end{proof}

We note that these counterexamples may be embedded in higher
dimensions and perturbed to lie in general position.
%

\section{Conclusion}
\label{sec:conclusions}

The relationship between a Rips complex and its projected shadow is
extremely delicate, as evidenced by the universality result for
quasi-Rips complexes (Theorem \ref{thm:quasi}) and the lack od
general $k$-connectivity in $\real^n$ (\S\ref{sec:kn}). These
results act as a foil to Theorem \ref{thm:main}: it is by no means
{\em a priori} evident that a planar Rips complex should so
faithfully capture its shadow.

We close with a few remarks and open questions.
\begin{enumerate}
\item
Are the cross-polytopes of Proposition \ref{prop:2D} the only
significant examples of higher homology in a (planar) Rips complex?
If all generators of the homology $H_k(\Rips)$ for $k>1$ could be
classified into a few such `local' types, then, after a local
surgery on $\Rips$ to eliminate higher homology, one could use the
Euler characteristic combined with Theorem \ref{thm:main} as a means
of quickly computing the number of holes in the shadow of a planar
Rips complex. This method would have the advantage of being local
and thus distributable.
\item
Does the projection map preserve $\pi_1$ for a Rips complex of
points in $\real^3$? Our proofs for the 2-d case rest on some
technical lemmas whose extensions to 3-d would be neither easy to
write nor enjoyable to read. A more principled approach would be
desirable, but is perhaps not likely given the $1$-connectivity on
$\real^3$ is a borderline case.
\item
What are the computational and algorithmic issues associated with
determining the shadow of a (planar) Rips complex? See \cite{CEW}
for recent progress, including algorithms for test contractibility
of cycles in a planar Rips complex and a positive lower bound on the
diameter of a hole in the shadow.
\end{enumerate}


\bibliographystyle{plain}

\end{document}